\newtheorem{defn}{Definition}
\newtheorem{thm}[defn]{Theorem}
\newtheorem{rem}[defn]{Remark}
\newtheorem{lem}[defn]{Lemma}
\newtheorem{exm}[defn]{Example}
\title{Meso-scale obstructions to\\ stability of 1D center manifolds\\
for networks of coupled differential\\
equations with symmetric Jacobian
}
\author{J.\ Epperlein, A.L.\ Do, T.\ Gross, S.\ Siegmund}
\newcommand{\RR}{{\mathbb R}}
\newcommand{\NN}{{\mathbb N}}
\newcommand{\CC}{{\mathbb C}}
\newcommand{\AC}{{\mathcal A}}
\newcommand{\DC}{{\mathcal D}}
\newcommand{\GC}{{\mathcal G}}
\newcommand{\LC}{{\mathcal L}}
\newcommand{\FC}{{\mathcal F}}
\newcommand{\ones}{{\mathbf 1}}
\newcommand{\setsep}{\:|\:}
\begin{document}

\maketitle

\begin{abstract}
A linear system $\dot x = Ax$, $A \in \RR^{n \times n}$, $x \in
\RR^n$, with $\mathrm{rk} A = n-1$, has a 
one-dimensional center manifold $E^c = \{v \in \RR^n : Av=0\}$. If a
differential equation $\dot x = f(x)$ has a one-dimensional center
manifold $W^c$ at an equilibrium $x^*$ then $E^c$ is tangential to
$W^c$ with $A = Df(x^*)$ and for stability of $W^c$ it is necessary
that $A$ has no spectrum in $\CC^+$, i.e.\ if $A$ is symmetric, it has
to be negative semi-definite.

We establish a graph theoretical approach to characterize
semi-definiteness. Using spanning trees for the graph corresponding to
$A$, we formulate meso-scale conditions with certain principal minors
of $A$ which are necessary for semi-definiteness. We illustrate these
results by the example of the Kuramoto model of coupled oscillators. 
\end{abstract}

\section{Introduction}

Center manifolds of differential equations $\dot x = f(x)$ play an important role, e.g.\ in bifurcation theory \cite{Crawford1991, Sijbrand1985} or, more specifically, if certain quantities or invariants are preserved, e.g.\ for mechanical systems with rigid-body modes, in chemical kinetics, compartmental modeling, and population dynamics, oftentimes these center manifolds are one-dimensional and consist of a continuum of equilibria (see e.g.\ \cite{Aulbach1984, Bhat-Bernstein2003} and the references therein). If a center manifold $W^c$ is one-dimensional and contains an equilibrium $x^*$ then $\mathrm{rk} Df(x^*) = n-1$. For $W^c$ to be stable it is then necessary that $A := Df(x^*)$ has no eigenvalue with positive real part, i.e.\ $A$ has to be negative semi-definite, since we assume the Jacobian $A$ to be symmetric. The assumption $A=A^T$ is e.g.\ satisfied if the coupling of the differential equations $\dot x_i = f(x_1,\dots,x_n)$, $i=1,\dots,n$, is symmetric. W.l.o.g.\ one can assume that $A = (a_{ij})$ has row sum zero, i.e.\ $\sum_{j=1}^n a_{ij} = 0$ for $i=1,\dots,n$. This is a consequence of the fact that the unit vectors $v_1 \in E^c = \{v \in \RR^n  : Av=0\}$ resp.\ $w_1 = \frac{1}{|(1,\dots,1)|} (1,\dots,1) \in \RR^n$ can be extended to orthogonal bases $\{v_1,\dots,v_n\}$ and $\{w_1,\dots,w_n\}$ of $\RR^n$, i.e.\ the matrices $V = (v_1|\dots|v_n)$ and $W = (w_1|\dots|w_n)$ are orthogonal. The orthogonal transformation $y = WV^T x$ preserves stability and transforms $\dot x = Ax$ and its center manifold $E^c$ into $\dot y = WV^T A VW^T y$ with corresponding center manifold $\{\alpha w_1 \in \RR^n : \alpha \in \RR\}$, i.e.\ $A w_1 = 0$. 

In the present paper we discuss semi-definiteness of a matrix $A \in \RR^{n \times n}$ by Sylvester's criterion. Our results provide a rigorous proof of a recently formulated stability condition \cite{Do2012} and show that utilizing spanning trees of the Coates graph of $A$ is a promising approach for identifying obstructions to the stability of one-dimensional center manifolds, or formulated in a more application-oriented language, the impact of meso-scale structural properties on the dynamics of complex networks. As a motivational example consider the Kuramoto model of a network of coupled oscillators (see \cite{PikovskyRosenblumKurths2003} for an excellent survey)
\begin{equation}\label{Kuramoto}
  \dot{\theta}_i=\omega_i+\sum_{j\neq i} B_{ij}\sin(\theta_j-\theta_i)\ , \quad \forall i\in \{1\ldots N\} 
\end{equation}
on the $N$-torus $T^N = (S^1)^N$, with $S^1 = \RR / (2\pi \mathbb{Z}) \simeq [0,2\pi)$ denoting the circle. Here, $\theta_i \in S^1$ and $\omega_i \in S^1$ denote the phase and the intrinsic frequency of node $i$, while $(B_{ij}) \in \RR^{N\times N}$ is the symmetric weight matrix representing an undirected, weighted interaction network. 
Two oscillators $i,j$ are thus connected if $B_{ij}=B_{ji}\neq 0$.
Note that, if for a solution $\theta = (\theta_1,\dots,\theta_N) : \RR \rightarrow T^N$ we sum up all equations in \eqref{Kuramoto}, then  $\sum_{i=1}^N \dot\theta_i(t) = \sum_{i=1}^N \omega_i + \sum_{i=1}^N \sum_{j\neq i} B_{ij}\sin(\theta_j(t)-\theta_i(t))$. Since $B_{ij} = B_{ji}$, it follows that $\sum_{i=1}^N \sum_{j\neq i} B_{ij}\sin(\theta_j(t)-\theta_i(t)) = 0$ for $t \in \RR$ and hence $\sum_{i=1}^N \dot\theta_i(t) = \sum_{i=1}^N \omega_i$. 

The Kuramoto model provides an example for studying synchronisation in networks and is widely accepted as a simple model of continous dynamics on networks. To see that the following three statements are equivalent
\begin{itemize}
  \item[(i)] $\theta^*$ is a \emph{phase-locked} solution, i.e.\ $\dot{\theta}^*_i(t) = \dot{\theta}^*_j(t)$ for all $t \in \RR, i, j \in \{1,\dots,N\}$,
  \item[(ii)] $\theta^*(\cdot + {\omega(T)})$ is a phase-locked solution for each ${\omega(T)} \in \RR$,
  \item[(iii)] $\dot \theta^*_j(t) = \Omega$ for all $t \in \RR$, $j \in \{1,\dots,N\}$, with the \emph{mean frequency} $\Omega := \frac{1}{N} \sum_{i=1}^N \omega_i$,
\end{itemize}
note that $(iii) \Rightarrow (ii) \Rightarrow (i)$ is obvious, and $(i) \Rightarrow (iii)$ follows from $N \dot\theta_j^*(t) = \sum_{i=1}^N \dot\theta_i^*(t) = \sum_{i=1}^N \omega_i^*$.
As a consequence, every phase-locked solution can be re\-presented as $t \mapsto \theta^*(t+{\omega(T)}) = \theta^*({\omega(T)}) + (\Omega t,\dots,\Omega t)$ for an ${\omega(T)} \in \RR$ and a phase-locked solution $\theta^*$.

The Kuramoto model \eqref{Kuramoto} can also be viewed in a `rotating frame', more precisely, in the new coordinates $x_i(t) = \theta_i(t) - \Omega t$. The transformed model
\begin{equation}\label{KuramotoTransformed}
  \dot{x}_i=\omega_i - \Omega + \sum_{j\neq i} B_{ij}\sin(x_j-x_i)\ , \quad \forall i\in \{1\ldots N\} 
\end{equation}
has mean frequency equal to zero and every phase-locked solution $\theta^*(\cdot + {\omega(T)})$, ${\omega(T)} \in \RR$, of \eqref{Kuramoto} is transformed to an equilibrium $x^{({\omega(T)})}$ of \eqref{KuramotoTransformed} with $x^{({\omega(T)})}_i = \theta^*_i({\omega(T)})$, giving rise to a 1-dimensional manifold ${\cal N} = \{x^{({\omega(T)})} \in T^N : {\omega(T)} \in \RR\}$ consisting of a continuum of equilibria of \eqref{KuramotoTransformed}.
The corresponding manifold ${\cal M} = \{\theta^*(t) \in T^N : t \in \RR \textup{ and } \theta^* \textup{ is a phase-locked solution of \eqref{Kuramoto}}\}$, consisting of all orbits of phase-locked solutions of \eqref{Kuramoto}, is called \emph{stable with asymptotic phase} if there exists a neighborhood $U$ of ${\cal M}$ such that for any solution $\theta$ starting in $U$, i.e.\ $\theta(0) \in U$, there exists a phase-locked solution $\theta^*$ in $\cal M$, i.e.\ $\theta^*(t) \in \cal M$ for $t \in \RR$, with $\lim_{t \to \infty} d(\theta(t), \theta^*(t)) = 0$, where $d(\cdot,\cdot)$ denotes the distance between two points on the torus $T^N$.

The linearization of \eqref{KuramotoTransformed} at an equilibrium $x^{({\omega(T)})} \in {\cal N}$ is given by $\dot{x} = A x$ with
\begin{equation}\label{Jacobian}
  A = 
  \begin{pmatrix}
    - \sum_{j\neq 1} B_{1j}\cos(\varphi_{j1}) & B_{12}\cos(\varphi_{21}) & \dots & B_{1N}\cos(\varphi_{N1})
  \\
    B_{21}\cos(\varphi_{12}) & - \sum_{j\neq 2} B_{2j}\cos(\varphi_{j2}) & \dots & B_{2N}\cos(\varphi_{N2})
  \\
    \vdots & \vdots & \ddots & \vdots
  \\
    B_{N1}\cos(\varphi_{1N}) &  B_{N2}\cos(\varphi_{2N}) & \dots & - \sum_{j\neq N} B_{Nj}\cos(\varphi_{jN})
  \end{pmatrix}
\ ,
\end{equation}
where $\varphi_{ji} := \theta^*_j({\omega(T)}) - \theta^*_i({\omega(T)})$ is independent of ${\omega(T)} \in \RR$ and the phase-locked solution $\theta^*$. 
Note that $A$ has row sum zero and therefore $0$ is an eigenvalue.
The following theorem characterizes the fact that the manifold $\cal M$ is stable with asymptotic phase by properties of \eqref{KuramotoTransformed} on and in the vicinity of $\cal N$. For an alternative approach using set-valued Lyapunov functions see e.g.\ \cite{Goebel2010} and the references therein.

\begin{thm}\label{thm1} Assume that $\theta^*$ is a phase-locked solution of \eqref{Kuramoto} and $\operatorname{rank} A = N-1$. Then the following three statements are equivalent:
\begin{itemize}
  \item[(i)] $\theta^*$ is \emph{orbitally stable} in the sense that $\cal M$ is a stable manifold of phase-locked solutions of \eqref{Kuramoto} with asymptotic phase.
  \item[(ii)] $\cal N$ is a stable center manifold of equilibria of \eqref{KuramotoTransformed} with asymptotic phase, i.e.\ there exists a neighborhood $V$ of $\cal N$ such that solutions starting in $V$ converge for $t \to \infty$ to an equilibrium in $\cal N$. 
  \item[(iii)] The manifold $\cal N$ of equilibria of \eqref{KuramotoTransformed} satisfies the following two conditions:
  \begin{itemize} 
    \item[(a)] linear stability: $A$ has no eigenvalues with positive real part, i.e.\ $A$ is negative semi-definite.
    \item[(b)] global attractivity: there exists a  neighborhood $V$ of $\cal N$, such that solutions starting in $V$ converge for $t \to \infty$ to $\cal N$. 
  \end{itemize}
\end{itemize}
\end{thm}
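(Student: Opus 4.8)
We outline the strategy. The plan is to prove $(i)\Leftrightarrow(ii)$ by the rotating-frame change of variables and $(ii)\Leftrightarrow(iii)$ by separating a soft (linear, necessary) part from the one genuinely dynamical implication $(iii)\Rightarrow(ii)$.

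For $(i)\Leftrightarrow(ii)$ we would use that for each fixed $t$ the rotating-frame map $h_t\colon T^N\to T^N$, $h_t(\theta)=\theta-\Omega t\,\ones$, is a translation, hence an isometry of the torus, and that $\theta(\cdot)$ solves \eqref{Kuramoto} if and only if $x(t):=h_t(\theta(t))$ solves \eqref{KuramotoTransformed}. Under this (time-dependent) conjugacy a phase-locked solution $\theta^*$ is carried to the equilibrium $x^*\equiv\theta^*(0)\in\mathcal{N}$, the orbit of $\theta^*$ and $\mathcal{N}$ are the \emph{same} subset of $T^N$, namely the circle $\theta^*(0)+\{s\,\ones:s\in S^1\}$, which is $h_t$-invariant, so $h_t(\mathcal{M})=\mathcal{N}$; and $d(\theta(t),\theta^*(t))=d(x(t),x^*)$ since $h_t$ is an isometry. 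Hence ``there is a phase-locked $\theta^*\in\mathcal{M}$ with $d(\theta(t),\theta^*(t))\to0$'' translates verbatim into ``$x(t)$ converges to some equilibrium of $\mathcal{N}$'', and (because $h_0=\mathrm{id}$) neighbourhoods of $\mathcal{M}$ correspond to neighbourhoods of $\mathcal{N}$. This yields $(i)\Leftrightarrow(ii)$.

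For $(ii)\Rightarrow(iii)$, condition (b) is just (ii) with ``converges to an equilibrium of $\mathcal{N}$'' relaxed to ``converges to $\mathcal{N}$'', so it is immediate. For (a) we note that $A$ in \eqref{Jacobian} is symmetric (since $B_{ij}=B_{ji}$ and $\cos$ is even) and, as $\operatorname{rank}A=N-1$, has $0$ as a simple eigenvalue with eigenvector $\ones$, while $T_p\mathcal{N}=\operatorname{span}\ones$ for every $p\in\mathcal{N}$; a positive eigenvalue of $A$ would (by the unstable manifold theorem) produce a one-dimensional unstable manifold through $p$ tangent to an eigenvector orthogonal to $\ones$, hence transverse to $\mathcal{N}$, so arbitrarily close to $p$ there would be points whose forward orbit leaves every small neighbourhood of $\mathcal{N}$ --- contradicting stability of $\mathcal{N}$, which is exactly the necessity statement recalled in the Introduction. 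Being symmetric, $A$ is then negative semi-definite.

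The main step is $(iii)\Rightarrow(ii)$. Here $\mathcal{N}$ is a compact (an embedded circle) one-dimensional invariant manifold consisting entirely of equilibria, and by (a) together with $\operatorname{rank}A=N-1$ the linearization at each $p\in\mathcal{N}$ has center subspace exactly $T_p\mathcal{N}=\ker A$, a complementary stable subspace on which all eigenvalues are negative, and no unstable subspace; thus $\mathcal{N}$ is a normally attracting (in particular normally hyperbolic) invariant manifold, agreeing locally with the center manifold. We would then invoke the center manifold theorem with asymptotic phase for manifolds of equilibria --- the reduction principle of \cite{Aulbach1984}, or equivalently the strong stable foliation of a normally hyperbolic invariant manifold --- to obtain a neighbourhood $V_0$ of $\mathcal{N}$ and a continuous invariant foliation $W^s_{\mathrm{loc}}(\mathcal{N})\cap V_0=\bigcup_{p\in\mathcal{N}}W^{ss}_{\mathrm{loc}}(p)$ (disjoint) into $(N-1)$-dimensional fibres transverse to $\mathcal{N}$, each point of $W^{ss}_{\mathrm{loc}}(p)$ converging exponentially to $p$. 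Taking the neighbourhood $V$ of (b) inside $V_0$, a solution starting in $V$ converges to $\mathcal{N}$ by (b), hence from some time $T$ on stays close enough to $\mathcal{N}$ to remain in $V_0$, so $x(T)\in W^s_{\mathrm{loc}}(\mathcal{N})$ lies on some fibre $W^{ss}_{\mathrm{loc}}(p)$ and the solution converges to $p\in\mathcal{N}$; this is (ii). (Since $\mathcal{N}$ is normally attracting, $V_0$ is itself a valid $V$, so (a) already implies (ii); (b) is listed separately because in practice it is verified directly, e.g.\ by a Lyapunov function, without invariant manifold theory.) We expect the main difficulty to be precisely this passage from the purely linear data in (a) to convergence of each nearby orbit to a \emph{single} equilibrium: correctly invoking the center-manifold-with-asymptotic-phase (normally-hyperbolic-invariant-manifold) machinery for the non-hyperbolic circle of equilibria $\mathcal{N}$ --- checking its hypotheses ($\mathcal{N}$ compact, $f$ smooth, center bundle $=T\mathcal{N}$, spectral gap $0$ versus negative reals) and matching the ``closeness to $\mathcal{N}$'' supplied by (b) with membership in the local stable manifold. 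By comparison, the change of variables $(i)\Leftrightarrow(ii)$ and the necessity direction $(ii)\Rightarrow(iii)\mathrm{(a)}$ are routine.
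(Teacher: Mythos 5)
Your proof is correct and follows essentially the same route as the paper: the rotating-frame conjugacy for $(i)\Leftrightarrow(ii)$, the unstable-manifold contradiction for $(ii)\Rightarrow(iii)$(a), and Aulbach's reduction principle (equivalently, normal attractivity of the compact circle of equilibria $\mathcal{N}$) for $(iii)\Rightarrow(ii)$, where the paper simply cites \cite{Aulbach1984} and you spell out the strong stable foliation argument. Your parenthetical remark that (a) alone already yields (ii) for compact $\mathcal{N}$ is a reasonable aside; the paper nevertheless retains (b) as a separately stated, directly verifiable hypothesis.
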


\begin{proof}
$(iii) \Rightarrow (ii)$. This follows from \cite[Proposition 4.1 \& proof of Theorem 4.1]{Aulbach1984}.

$(ii) \Rightarrow (iii)$. (iii)(b) follows directly from (ii). To prove (iii)(a), assume that $A$ has an eigenvalue with positive real part. Then the linearization $\dot x = Ax$ has an unstable subspace giving rise to unstable manifolds for every equilibrium $x^{({\omega(T)})}$ of \eqref{KuramotoTransformed}, which contradicts the fact that every solution in $V$ converges to an equilibrium in $\cal N$ for $t \to \infty$.

$(i) \Leftrightarrow (ii)$. The transformation $x_i(t) = \theta_i(t) - \Omega t$ maps $\cal M$ to $\cal N$ and preserves the property of asymptotic phase of corresponding solutions.
\end{proof}

Our main aim for the Kuramoto model in this paper is to
\begin{itemize}
  \item[(i)] reveal the dependence of the stability of the continuum of equilibria on certain topological properties of the coupling network,
  \item[(ii)] efficiently evaluate algebraic stability criteria.
\end{itemize}
To achieve this goal, we discuss a characterization of condition (iii)(a) of Theorem \ref{thm1} which shows that semi-definiteness is a necessary condition for stability.

Semi-definiteness can be characterized in terms of the determinants of the submatrices (minors) of a matrix via Sylvester's criterion. Stability analysis by means of Sylvester's criterion is well-known in control theory and has been applied to problems of different fields from fluid- and thermodynamics to offshore engineering and social networks (see \cite{do-gross2012} and the references therein).
From a modern perspective the application of Sylvester's criterion is very appealing as it can reveal stability criteria on different scales.  
The smallest minor considered by Sylvester's criterion is just a diagonal element of the Jacobian matrix and thus poses a condition on the internal dynamics on a single vertex. The subsequent minors pose conditions on subgraphs such as vertex pairs, triplets, and so on. Sylvester's criterion can thereby contribute to the search of meso-scale structural properties that have an impact on the global dynamics of networks, which is becoming a current topic in physics \cite{ChaosFocusIssue,Frey2012}. 
However, due to the rapidly growing complexity of the minors to be evaluated, the application of the criterion was previously limited to systems with few degrees of freedom. We present a way to apply Sylvester's criterion to systems with many degrees of freedom. Generalizing the classical Kirchhoff Theorem, we obtain a characterization of the minors by properties of forests in the coupling graph of our system. 

We recall some basic notions from graph theory.
A weighted graph $G$ is a triple $(V,E,\omega)$ consisting of
a finite non-empty set of vertices $V$, a set of edges $E \subseteq V^{(2)}$ consisting of unordered pairs 
of vertices and a weight function $\omega\colon E \rightarrow \RR \setminus \{0\}$.

A \emph{walk} of length $\ell$ from $i_0 \in V$ to $i_\ell \in V$ is a sequence of vertices $i_0,\ldots,i_\ell$ such that 
$\{i_{k},i_{k+1}\} \in E$ for $k \in \{1,\ldots,\ell-1\}$. A \emph{cycle} is a walk
for which $i_0=i_\ell$ and $i_k \neq i_{k'}$ for $0<k<k'$. 
A graph is \emph{connected} if for each pair of vertices $(i,j)$ there is a walk from $i$ to $j$.
A \emph{forest} is a graph without cycles, a \emph{tree} is a connected forest.

A graph $G'=(V',E',\omega')$ is a subgraph of $G=(V,E,\omega)$ if $V' \subseteq V$, $E' \subseteq E$ and $\omega'=\omega_{|E'}$.
If $V'=V$ the subgraph is \emph{spanning}. A \emph{spanning tree} of a graph 
is a subgraph that is spanning and a tree.

We will identify subsets $E' \subseteq E$ with the \emph{induced subgraph} $G' = (V',E',\omega')$ of $G$, where $V' = \{i \in V \colon \exists j \in V \textup{ with } \{i,j\} \in E'\}$ and $\omega' = \omega_{\mid E'}$.

A \emph{cut} of the graph $G$ given by a partition $V_1,V_2$ of the vertex set is the set of edges $E(V_1,V_2)$ between $V_1$ and $V_2$.

The weighted adjacency matrix $\AC(G) \in \RR^{n \times n}$ of a graph $G=(V,E)$ with $V=\{1,\ldots,n\}$ is defined by 
\[
  \AC(G)_{ij}=\begin{cases}
    \omega(i,j) & \text{if } \{i,j\} \in E\\
    0 & \text{otherwise}
  \end{cases}.
\]

For a symmetric matrix $A = (a_{ij}) \in \RR^{n \times n}$ the \emph{Coates-graph}
$\GC(A)=(V,E,\omega)$ is defined as the weighted undirected graph with vertices $V=\{1,\ldots,n\}$, edges 
$E =\{ \{i,j\} \colon a_{ij} \not=0\}$ and weight function $\omega(\{i,j\}):=a_{ij}$.
On the other hand, for each undirected graph its associated adjacency matrix is symmetric.

The \emph{Laplace-matrix} $\LC(G)$ of a graph $G$ is defined by $\LC(G)=\DC(G)-\AC(G)$ where $\DC(G)$ is the diagonal \emph{degree matrix} $\DC(G):=(\AC(G)\ones)_i$. Take for example $G$ with
\begin{flalign*}
 \AC(G)=\begin{pmatrix}
  0  &  \frac{1}{2} &  0 & -3 \\
 \frac{1}{2} &0 & 1 & -2 \\
 0 & 1 & 0 & 1 \\
 -3 & -2 & 1 & 0
 \end{pmatrix},
\textup{then} \qquad
&
\LC(G)=\begin{pmatrix}
  -2\frac{1}{2}  &  -\frac{1}{2} &  0 & 3 \\
 -\frac{1}{2} &-\frac{1}{2} & -1 & 2 \\
 0 & -1 & 2 & -1 \\
 3 & 2 & -1 & -4
 \end{pmatrix}.
\end{flalign*}

\begin{rem}\label{rem-adjlap}
(a) If loops in $G$ are removed, then $\LC(G)$ does not change.

(b) If $G$ has an adjacency matrix $\AC(G)$ which has row sum zero, then $\LC(G)=-\AC(G)$. 
\end{rem}

The structure of the paper is as follows. In Section 2 we recall Sylvester's criterion in Theorem \ref{thm-sylvester}. Remark \ref{rem-checkdef} emphasizes with an example the fact that testing for semi-definiteness is fundamentally more costly than checking definiteness. The main result of this section, Theorem \ref{thm-innerminors}, is a version of Sylvester's criterion for zero-row-sum matrices of maximal rank. Section 3 is devoted to the Matrix Tree Theorem \ref{thm-matrix-tree} for principal minors. Its proof is split in three parts (Lemmas \ref{lem-cb-minors}, \ref{lem-Laplacian-Incidence} and \ref{lem-charact-MSL}). Section 4 contains Lemma \ref{lem-cutting} on how to cut a forest, which is used to prove the combinatorial identity in Theorem \ref{thm-identity}. Section 5 finally proves the existence of a positive spanning tree in the connected components of Coates graphs of negative semi-definite matrices and gives some applications of this theorem.

\section{Definiteness and Sylvester's criterion}
In this section we recall Sylvester's criterion and adapt it to the case of positive semi-definite matrices with zero row sum and maximal rank.

Let $p, q \in \NN, S \subseteq \{1,\ldots,p\}, T \subseteq \{1,\ldots,q\}$ with $|S|=|T|$ and $B \in \RR^{p \times q}$.
We denote the submatrix of $B$ defined by $S$ and $T$ by $B_{S,T}$.
The determinant of this submatrix is called the \emph{minor} corresponding to $S$ and $T$
for which we write $[B]_{S,T}$.

If $B$ is a square matrix, we call $B_{S,S}$ a \emph{principal submatrix} and
its determinant a \emph{principal minor}. If additionally
$S=\{1,\ldots,k\}$ for some $k \in \NN$, we call it a \emph{leading principal submatrix/minor}.

Recall that  $L\in \RR^{n \times n}$ is called
\begin{itemize}
 \item \emph{positive (semi) definite} iff $\forall v \in \RR^n\colon v^T L v > 0 \:\:(v^T L v \geq 0)$,
 \item \emph{negative (semi) definite} iff $\forall v \in \RR^n\colon v^T L v < 0 \:\:(v^T L v \leq 0)$.
\end{itemize}

\begin{thm}[Sylvester's criterion, {\cite[Theorem 7.2]{Zhang2011}}]\label{thm-sylvester}
A symmetric matrix $L \in \RR^{n \times n}$ is positive definite iff every leading principal minor of $L$ is positive. $L$ is positive semi-definite iff every principal minor of $L$ is non-negative, i.e.\ $[L]_{S,S} \geq 0$ for all $S \subseteq \{1,\ldots,n\}$.
\end{thm}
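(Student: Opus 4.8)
The plan is to establish the two stated equivalences in turn, splitting each into its ``only if'' and ``if'' direction, and to reduce the semi-definite equivalence to the definite one by a perturbation argument. The only tools needed are the definition of (semi-)definiteness, the spectral theorem for symmetric matrices, and multiplicativity of the determinant.

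For both ``only if'' directions I would use that positive (semi-)definiteness is inherited by principal submatrices: given $S\subseteq\{1,\dots,n\}$ and $w\in\RR^{|S|}$, let $u\in\RR^n$ be the vector carrying $w$ in the coordinates indexed by $S$ and zero elsewhere; then $u^TLu=w^TL_{S,S}w$, so $L$ positive (semi-)definite forces $L_{S,S}$ positive (semi-)definite. A symmetric positive definite matrix has only positive eigenvalues, a symmetric positive semi-definite matrix only non-negative ones, so in either case its determinant --- the product of the eigenvalues --- is positive, respectively non-negative. Specialising to $S=\{1,\dots,k\}$ for all $k$ gives the definite ``only if'' direction, and to arbitrary $S$ the semi-definite one.

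For the ``if'' direction of the positive-definite statement I would induct on $n$; the case $n=1$ is trivial. Assume all leading principal minors of $L\in\RR^{n\times n}$ are positive. The leading principal minors of the block $L':=L_{\{1,\dots,n-1\},\{1,\dots,n-1\}}$ are among those of $L$, so $L'$ is positive definite by the induction hypothesis, in particular invertible. Writing $L=\begin{pmatrix}L' & b\\ b^T & c\end{pmatrix}$, the block congruence $L\mapsto PLP^T$ with $P=\begin{pmatrix}I & 0\\ -b^T(L')^{-1} & 1\end{pmatrix}$ exhibits $L$ as congruent to $\operatorname{diag}\!\big(L',\,c-b^T(L')^{-1}b\big)$. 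Since congruence preserves the signature, it remains to check that the Schur complement $s:=c-b^T(L')^{-1}b$ is positive; but $\det L=\det L'\cdot s$ with $\det L>0$ and $\det L'>0$, so $s>0$ and $L$ is positive definite.

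The ``if'' direction of the semi-definite statement is the step I expect to be the main obstacle, since here one genuinely needs all principal minors, not merely the leading ones --- for example $\operatorname{diag}(0,-1)$ has both leading principal minors equal to $0$, hence non-negative, but is not positive semi-definite. I would handle it by perturbation: for $\varepsilon>0$ and each $k$, expand the $k$-th leading principal minor of $L+\varepsilon I$ as $\det\!\big(L_{\{1,\dots,k\},\{1,\dots,k\}}+\varepsilon I\big)=\sum_{j=0}^{k}\varepsilon^{k-j}\sigma_j$, where $\sigma_j$ is the sum of all $j\times j$ principal minors of $L_{\{1,\dots,k\},\{1,\dots,k\}}$ and $\sigma_0=1$. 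Every such minor is a principal minor of $L$, hence non-negative by hypothesis, so each coefficient $\sigma_j$ is non-negative while the top one equals $1>0$; thus every leading principal minor of the symmetric matrix $L+\varepsilon I$ is strictly positive. By the positive-definite case just proved, $L+\varepsilon I$ is positive definite for every $\varepsilon>0$, and letting $\varepsilon\to0^+$ yields $v^TLv=\lim_{\varepsilon\to0^+}v^T(L+\varepsilon I)v\ge0$ for all $v\in\RR^n$, so $L$ is positive semi-definite. The negative (semi-)definite statements used elsewhere in the paper follow by applying the result to $-L$.
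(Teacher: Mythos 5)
The paper gives no proof of this theorem at all --- it is quoted verbatim from \cite[Theorem 7.2]{Zhang2011} --- so there is no in-paper argument to compare yours against. Your proposal is a correct and complete proof, and it is the standard textbook argument: inheritance of (semi-)definiteness by principal submatrices plus $\det = \prod \lambda_i$ for the two ``only if'' directions; induction with the Schur complement $s = c - b^T(L')^{-1}b$ and Sylvester's law of inertia, together with $\det L = \det L' \cdot s$, for the positive-definite ``if'' direction; and the perturbation $L + \varepsilon I$ with the expansion of $\det(L_{\{1,\dots,k\},\{1,\dots,k\}} + \varepsilon I)$ into sums of principal minors for the semi-definite ``if'' direction. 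All steps check out (in particular the sign bookkeeping in $\det(M+\varepsilon I)=\sum_{j}\varepsilon^{k-j}\sigma_j$ with $\sigma_0=1$, and the fact that a principal minor of a principal submatrix of $L$ is again a principal minor of $L$). Your counterexample $\operatorname{diag}(0,-1)$ correctly isolates why all $2^n$ principal minors, not just the leading ones, are needed in the semi-definite case --- the same point the paper makes in Remark \ref{rem-checkdef} with a $4\times 4$ example.
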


\begin{rem}[Checking for semi-definiteness]\label{rem-checkdef}
Note that for definiteness one has to check the sign of only $n$ minors of $L$,
 while one has to check in general the sign of all $2^n$ principal minors to test for semi-definiteness.
 Even if we know that $0$ is a simple eigenvalue of $L$, $\GC(L)$ is connected and $L$ has eigenvector $\ones$,
it is not enough to check the leading principal minors for non-negativity to ensure semi-definiteness. Consider the following matrix:
\vspace{-0.5cm}
\begin{flalign*}
 C&=\begin{pmatrix}
    0  &  0 &  1 & -1 \\
    0  & -1 &  1 & 0 \\
    1  &  1 &  -2 & 0 \\
   -1  &  0 &  0 & 1
    \end{pmatrix}&
\GC(C)&=\begin{tikzpicture}[baseline=0.9cm]
\tikzstyle{LabelStyle}=[fill=white]
           \Vertex[x=0,y=0] {A}
           \Vertex[x=2,y=0] {B}
           \Vertex[x=2,y=2] {C}
           \Vertex[x=0,y=2] {D}
           \Edge[label=$1$](A)(C)
           \Edge[label=$-1$](A)(D)
           \Loop[label=$-1$,dir=EA,dist=1cm](B)
           \Edge[label=$1$](B)(C)
           \Loop[label=$-2$,dir=EA,dist=1cm](C)
           \Loop[label=$1$,dir=EA,dist=1cm](D)
          \end{tikzpicture}.
\end{flalign*}
The corresponding graph is connected, the corresponding characteristic polynomial is $x^4+2x^3-4x^2-4x$ (hence
zero is a simple eigenvalue), the leading principal minors are
\begin{align*}
 [C]_{\{1,2,3\},\{1,2,3\}}&=\begin{vmatrix}
    0  &  0 &  1 \\
    0  & -1 &  1 \\
    1  &  1 & -2 \\  
    \end{vmatrix}=1\\
[C]_{\{1,2\},\{1,2\}}&=\begin{vmatrix}
    0  &  0  \\
    0  & -1  \\
    \end{vmatrix}=0\\
[C]_{\{1\},\{1\}}&=\begin{vmatrix}
    0  
    \end{vmatrix}=0
.
\end{align*}
But $C$ is not positive semi-definite since for $v=(1,1,0,0)^T$ we have $v^T C v=-1 <0$.
\end{rem}

The following theorem is a version of Sylvester's criterion for zero-row-sum matrices.

\begin{thm}[Sylvester's criterion for zero-row-sum matrices of maximal rank]\label{thm-innerminors}
Let $L \in \RR^{n \times n}$ be a symmetric matrix with zero row sum.
Then the following are equivalent:
\begin{enumerate}[(i)]
\item $L$ is positive semi-definite and has rank $n-1$
\item $[L]_{S,S}>0$ for all $S \varsubsetneq \{1,\ldots,n\}$
\item $L_{S,S}$ is positive definite for all $S \subsetneq \{1,\ldots,n\}$
\item $[L]_{S,S}>0$ for all $S=\{1,\ldots,k\},\: k \in \{1,\ldots,n-1\}$
\item $L_{S,S}$ is positive definite for $S =\{1,\ldots,n-1\}$
\end{enumerate}
\end{thm}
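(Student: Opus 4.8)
The plan is to establish the cycle of implications $(i) \Rightarrow (iii) \Rightarrow (ii) \Rightarrow (iv) \Rightarrow (v) \Rightarrow (i)$, so that all five statements become equivalent. Three of these arrows are essentially bookkeeping with Sylvester's criterion (Theorem \ref{thm-sylvester}): for $(iii) \Rightarrow (ii)$ I would just note that a positive definite matrix has positive determinant, so $[L]_{S,S} = \det L_{S,S} > 0$; for $(ii) \Rightarrow (iv)$ I would note that every leading principal submatrix $L_{\{1,\dots,k\},\{1,\dots,k\}}$ with $k \le n-1$ is in particular a proper principal submatrix; and for $(iv) \Rightarrow (v)$ I would observe that the leading principal minors of $M := L_{\{1,\dots,n-1\},\{1,\dots,n-1\}}$ are exactly the numbers $[L]_{\{1,\dots,k\},\{1,\dots,k\}}$ for $k = 1,\dots,n-1$, all positive by $(iv)$, so Theorem \ref{thm-sylvester} gives that $M$ is positive definite.

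For $(i) \Rightarrow (iii)$, I would fix a proper subset $S \subsetneq \{1,\dots,n\}$. As a principal submatrix of the positive semi-definite matrix $L$, $L_{S,S}$ is positive semi-definite, so it remains to rule out a nontrivial kernel. If $L_{S,S} v = 0$ for some $v \in \RR^S$, I extend $v$ by zeros to $\tilde v \in \RR^n$; then $\tilde v^T L \tilde v = v^T L_{S,S} v = 0$, and since $L$ is positive semi-definite this forces $L \tilde v = 0$ (write $L = C^T C$ and note $\|C\tilde v\|^2 = 0$). Because $L$ has zero row sum, $\ones \in \ker L$, and $\operatorname{rank} L = n-1$ gives $\ker L = \mathrm{span}\,\ones$; but $\tilde v$ vanishes on the nonempty set $\{1,\dots,n\}\setminus S$, so $\tilde v = 0$ and hence $v = 0$. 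Thus $L_{S,S}$ is positive definite.

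The substantive step is $(v) \Rightarrow (i)$, which is the only place where the zero-row-sum hypothesis is genuinely used. Writing $L = \begin{pmatrix} M & b \\ b^T & c\end{pmatrix}$ with $M = L_{\{1,\dots,n-1\},\{1,\dots,n-1\}}$ positive definite, the zero-row-sum condition (hence, by symmetry, also zero column sum) forces $b = -M\ones_{n-1}$ and $c = \ones_{n-1}^T M \ones_{n-1}$. Then for $x = (y,t) \in \RR^{n-1}\times\RR$ a short computation gives $x^T L x = (y - t\ones_{n-1})^T M (y - t\ones_{n-1}) \ge 0$, with equality iff $y = t\,\ones_{n-1}$, i.e.\ iff $x$ is a scalar multiple of $\ones_n$. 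Hence $L$ is positive semi-definite with kernel exactly $\mathrm{span}\,\ones_n$, so $\operatorname{rank} L = n-1$, which is $(i)$. I expect the only real (and still minor) obstacles to be organizing this block identity cleanly and, in $(i) \Rightarrow (iii)$, justifying the passage $\tilde v^T L \tilde v = 0 \Rightarrow L \tilde v = 0$; all remaining arrows are direct appeals to Theorem \ref{thm-sylvester}.
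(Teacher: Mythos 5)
Your proof is correct, and the skeleton (easy arrows via Theorem \ref{thm-sylvester}, two substantive arrows) matches the paper's; the interesting divergence is in how the two hard implications are handled. For $(i)\Rightarrow(iii)$ the paper first invokes eigenvalue interlacing to reduce a vanishing minor $[L]_{S,S}=0$ to the case $|S|=n-1$, and only then pads a kernel vector of $L_{S,S}$ with a zero (using the zero-row-sum block form) to contradict $\operatorname{rank}L=n-1$; you skip the interlacing step entirely by padding for arbitrary $S\subsetneq\{1,\ldots,n\}$ and using the standard fact that $\tilde v^TL\tilde v=0$ forces $L\tilde v=0$ for positive semi-definite $L$, so that $\tilde v\in\ker L=\mathrm{span}\,\ones$ together with a vanishing coordinate gives $\tilde v=0$. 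Your route is more elementary (no interlacing needed) and proves the stronger statement $(iii)$ directly rather than $(ii)$. For $(v)\Rightarrow(i)$ the two arguments are the same algebra in different clothing: the paper argues by contradiction, taking an eigenvector $v$ for an eigenvalue $\lambda\le 0$ and substituting $u=v-v_n\ones$ to get $\tilde u^TM\tilde u=\lambda v^Tv\le 0$; you run the identity forwards as $x^TLx=(y-t\ones)^TM(y-t\ones)$ from the block form $b=-M\ones$, $c=\ones^TM\ones$, which has the advantage of exhibiting the kernel as exactly $\mathrm{span}\,\ones_n$ and hence delivering both semi-definiteness and the rank statement in one stroke. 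Both versions are complete; yours is arguably the cleaner exposition, the paper's keeps the machinery (interlacing) that it cites anyway.
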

\begin{proof}
The equivalences $(ii) \Leftrightarrow (iii)$ and $(iv) \Leftrightarrow (v)$ follow directly from Sylvester's criterion.
Since $(iii)$ contains $(iv)$ as a special case, there remain two implications to show.

  $(i) \Rightarrow (ii)$
    From Sylvester's criterion we know that $[L]_{S,S}\geq 0$ for all $S \subseteq \{1,\ldots,n\}, 0<|S|<n$.
    We will show that having a proper principal minor equal to zero will imply
    that zero is not a simple eigenvalue of $L$ under the given assumptions.
    By eigenvalue interlacing \cite{GodsilRoyle2001} we know that for each symmetric matrix $B \in \RR^{n \times n}$ 
    and each $S \subseteq \{1,\ldots,n\}, 0<|S|\leq n$
    the following inequality holds:
    \begin{align*}
     \lambda_{\text{min}} (B) \leq \lambda_{\text{min}} (B_{S,S})
    \end{align*}
    where $\lambda_{\text{min}}$ denotes the minimal eigenvalue.
    Now assume that $[L]_{S,S}=0$ for some $S \subseteq \{1,\ldots,n\}, 0<|S|<n$.
    Then there is $i \in \{1,\ldots,n\} \setminus S$. Define $\tilde{S}=\{1,\ldots,n\} \setminus i$.
    Then as seen above $0 =\lambda_{\text{min}}(L_{S,S}) \geq \lambda_{\text{min}}(L_{\tilde{S},\tilde{S}}) \geq 0$ and thus $[L]_{\tilde{S},\tilde{S}}=0$
    and there is an eigenvector $\tilde{v}$ of $L_{\tilde{S},\tilde{S}}$ for the eigenvalue zero.
    We will now show how to get an eigenvector of $L$ for the eigenvalue zero independent of $\ones$ thus
    giving a contradiction to zero being a simple eigenvalue of $L$.
    
    Assume wlog that $i=n$. Then, because of the zero row sum assumption, $L$ can be decomposed as
\begin{align*}
 \left(\begin{array}{ccc|c}
    ~&~&~&\\
    &\tilde{L} & &-\tilde{L}\ones \\
    ~&~&~& \\ \hline
    &-\ones^T \tilde{L} && \ones^T \tilde{L} \ones
 \end{array}\right).
\end{align*}
But then
\begin{align*}
 \left(\begin{array}{ccc|c}
    ~&~&~&\\
    &\tilde{L} & &-\tilde{L}\ones \\
    ~&~&~&\\ \hline
    &-\ones^T \tilde{L} && \ones^T \tilde{L} \ones
 \end{array}\right)
\left(\begin{array}{c}
       \tilde{v}\\ 0
      \end{array}
\right)
&=\left(\begin{array}{c}
    ~\\
    \tilde{L}\tilde{v} + 0 \\
    ~\\
    \hline
    -\ones^T \tilde{L}\tilde{v} +0
 \end{array}\right)
=0.
\end{align*}

$(v) \Rightarrow (i)$ If $L$ is not positive semi-definite or has rank smaller than $n-1$, there is an eigenvalue $\lambda\leq 0$ of
$L$ and a corresponding eigenvector $v=(v_1,\ldots,v_n)^T$ independent of $\ones$. Define $u=v - v_n\ones$ and $\tilde{u}=(u_1,\ldots,u_{n-1})\neq 0$.
Since $u_n=0$ we have 
\begin{align*}
  \tilde{u}^T L_{\{1,\ldots,n-1\},\{1,\ldots,n-1\}} \tilde{u} 
  & = u^T L u \\
  & = v^TLv -2v_n \ones^T L v+v_n^2 \ones^T L \ones \\
  & = v^TLv=\lambda v^Tv\leq 0.
\end{align*}
But this contradicts $L_{\{1,\ldots,n-1\},\{1,\ldots,n-1\}}$ being positive definite.
\end{proof}

\section{The Principal Minor Matrix Tree Theorem}

We are looking for properties of a symmetric, zero row sum $n \times n$ matrix $A \subseteq \RR^{n\times n}$ that are necessary for negative semidefiniteness. These properties are best described in terms of the structure of $A$, more precisely, the position and weight of the nonzero entries of $A$. Graph theory provides suitable concepts such as subgraphs, trees and cycles to describe this structure. In the light of remark \ref{rem-adjlap} $A$ is negative semidefinite
iff $\LC(G)=-A$ is positive semidefinite where $G=\GC(A)$ is the Coates graph of $A$. In this section, we therefore seek a combinatorial interpretation of the minors of $\LC(G)$  in terms of the subgraphs of $G$. 
For $S \subseteq \{1,\ldots,n\}, S\neq \emptyset,$ we define 
\begin{align*}
\FC_S:=\big\{K \subseteq E(G) \:\big|\: &|K|=|S|, \text{each connected component in } K \text{ contains at least one }\\
&\text{vertex not in } S\}.
\end{align*}

The definition of $\FC_S$ implies two features which are characterized by the next lemma.
\begin{lem}\label{lem-fcs-forest} Let $K \in \FC_S$. Then the following holds:\\
(a) $K$ is a forest.\\
(b) each tree in $K$ contains exactly one vertex not in $S$. 
\end{lem}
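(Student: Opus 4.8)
The plan is to prove both parts by a simple counting argument based on the constraint $|K| = |S|$ together with the defining requirement that every connected component of $K$ contains a vertex outside $S$.

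First I would recall the elementary fact from graph theory that a graph on $v$ vertices with $c$ connected components and $e$ edges satisfies $e \geq v - c$, with equality precisely when the graph is a forest (every component is a tree). So it suffices to show that $K$, viewed as the induced subgraph on its vertex set $V(K)$, has exactly $|V(K)| - (\text{number of components})$ edges. Write $K$ as a disjoint union of its connected components $K_1, \dots, K_c$, with $K_r$ having $v_r$ vertices and $e_r$ edges, so $|V(K)| = \sum_r v_r$ and $|K| = \sum_r e_r$. For each component $K_r$, connectivity forces $e_r \geq v_r - 1$. Summing gives $|K| = \sum_r e_r \geq \sum_r (v_r - 1) = |V(K)| - c$.

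Now I would bring in the membership hypothesis $K \in \FC_S$. Each component $K_r$ contains at least one vertex not in $S$; since the $V(K_r)$ are pairwise disjoint, the sets of ``external'' vertices (those in $V(K_r) \setminus S$) are disjoint and nonempty, so $c \leq |V(K) \setminus S| = |V(K)| - |V(K) \cap S| \leq |V(K)| - ?$. More directly: the number of vertices of $K$ lying in $S$ is at most $|V(K)| - c$, because we can remove one external vertex from each component. Hence $|V(K) \cap S| \leq |V(K)| - c$, i.e. $|V(K)| - c \geq |V(K) \cap S|$. On the other hand, $|S| = |K| \geq |V(K)| - c$ by the previous paragraph. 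Combining, $|S| \geq |V(K)| - c \geq |V(K) \cap S|$. But trivially $|V(K) \cap S| \leq |S|$ as well; the two-sided squeeze does not immediately collapse, so I need to be more careful: I would instead argue that since every edge of $K$ joins two vertices of $V(K)$, and each component's external vertices are disjoint, we have $c \leq |V(K)| - |S \cap V(K)|$ only if... Actually the clean route is: $|S| = |K| = \sum_r e_r \geq \sum_r (v_r - 1)$, and each component also satisfies $v_r - 1 \geq |S \cap V(K_r)|$ (remove one external vertex), so $|K| \geq \sum_r |S \cap V(K_r)| = |S \cap V(K)|$. Since $|S \cap V(K)| \leq |S| = |K|$, we get equality $|K| = |S \cap V(K)|$ throughout, which forces $e_r = v_r - 1$ for every $r$ (giving (a), $K$ is a forest) and $v_r - 1 = |S \cap V(K_r)|$ for every $r$, i.e. each component (a tree on $v_r$ vertices) has exactly one vertex outside $S$ (giving (b)). Along the way this also shows $S \subseteq V(K)$, a bonus fact that is consistent with the later development.

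The main obstacle, and the only place requiring genuine care, is bookkeeping the two inequalities $e_r \geq v_r - 1$ and $v_r - 1 \geq |S \cap V(K_r)|$ simultaneously and recognizing that the global equality $\sum_r e_r = |K| = |S| \geq |S \cap V(K)| = \sum_r |S \cap V(K_r)|$ can only hold if every individual inequality is tight; everything else is a direct unwinding of the definition of $\FC_S$. I would present the argument compactly: sum the chain $e_r \geq v_r - 1 \geq |S \cap V(K_r)|$ over components, compare with $\sum_r e_r = |S| \geq \sum_r |S \cap V(K_r)|$, conclude termwise equality, and read off (a) and (b).
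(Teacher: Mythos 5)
Your counting framework---$e_r \ge v_r-1 \ge |S\cap V(K_r)|$ per component, then sum---is essentially the same bookkeeping as the paper's proof, which bounds $|V(C)\cap S|$ by $|E(C)|$ componentwise and derives $|S|\le |K|-1$ whenever a cycle or a second external vertex is present. But the step where you close the argument is a non sequitur. After summing you have
\[
 |K| \;=\; \sum_r e_r \;\ge\; \sum_r (v_r-1) \;\ge\; \sum_r |S\cap V(K_r)| \;=\; |S\cap V(K)|,
\]
and you then invoke ``$|S\cap V(K)|\le |S|=|K|$'' to conclude equality throughout. These are the \emph{same} inequality $|K|\ge |S\cap V(K)|$ stated twice; from $a\ge b$ and $b\le a$ one cannot conclude $a=b$. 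You noticed this yourself mid-proof (``the two-sided squeeze does not immediately collapse'') but the ``clean route'' you substitute has exactly the same defect. What is actually needed to close the chain is the reverse bound $|S|\le |S\cap V(K)|$, i.e.\ $S\subseteq V(K)$: every vertex of $S$ must be incident to some edge of $K$. This is not a ``bonus fact'' that falls out of your argument; it is an input without which the argument does not terminate.

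Moreover, that containment is not free from the definition of $\FC_S$ as literally written. Under the paper's convention of identifying $K$ with the induced subgraph on the covered vertices, a $K$ consisting of a triangle on three vertices disjoint from $S$ (with $|S|=3$) satisfies $|K|=|S|$ and has its one component meeting the complement of $S$, yet violates both (a) and (b). So one must read ``connected component in $K$'' so that an uncovered vertex of $S$ counts as a singleton component (which then contains no vertex outside $S$ and disqualifies $K$), and this is exactly what forces $S\subseteq V(K)$. The paper's own proof silently uses the same fact in the step $|S|\le\sum_C|V(C)\cap S|$, so you are in good company, but your writeup claims to derive it rather than assume it. Once you insert the observation $S\subseteq V(K)$ explicitly, your termwise-equality argument is correct and in fact delivers (a) and (b) in a single pass, which is marginally tidier than the paper's two separate contradiction arguments.
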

\begin{proof}
(a) Assume $K$ is not a forest, i.e.\ it contains a cycle.
    Each connected component $C$ of $K$ has at most $|E(C)|$ vertices in $S$ and
    the component $C'$ containing the cycle contains at most $|E(C')|-1$ vertices in $S$. This gives
        \begin{align*}
          |S| \leq \sum_{C \text{ is conn.\ comp.\ of } K} |V(C) \cap S | \leq \Big(\sum_{C} |E(C)|\Big) -1 = |K|-1
              = |S|-1
        \end{align*}
        which is a contradiction.
\\[0.5ex]
(b) Assume there is a connected component $C'$ of $K$ with more then one vertex outside $S$.
    Then $C'$ contains at most $|E(C')|-1$ vertices in $S$ and we get the same contradiction as in (a).
\end{proof}

\begin{figure}
\includegraphics[width=\textwidth]{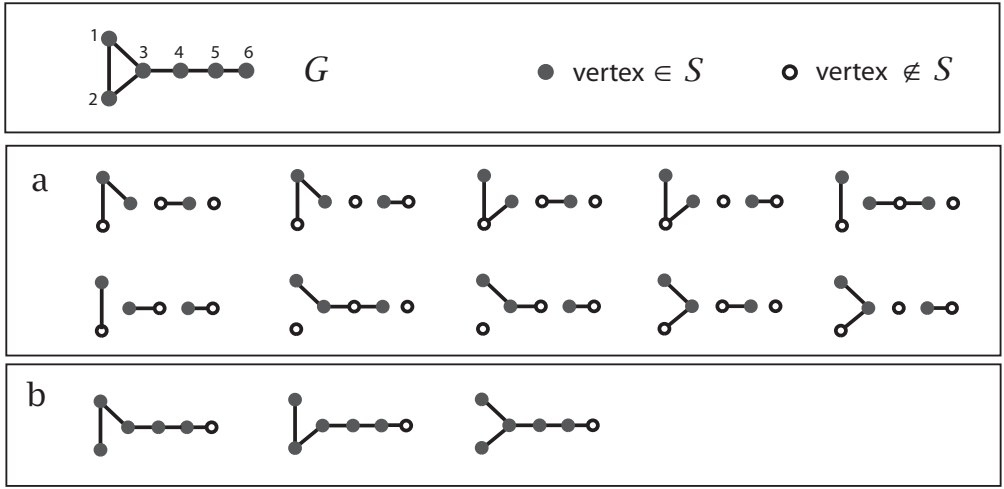}
\caption{Examples for $\FC_S$. Shown are the sets $\FC_S$ of a graph $G$ (upper panel) for $S=\{ 1,3,5\}$ (a),  $S=\{ 1,2,3,4,5\}$ respectively (b). Panel (b) illustrates the relation between Theorem~\ref{thm-matrix-tree} and Kirchhoff's Theorem: For all $G$ and all sets $S$ with $|S|=n-1$, $\FC_S$ is the sum over all spanning trees of $G$.  \label{Fs_fig}}
\end{figure}


\begin{thm}[Matrix Tree Theorem for principal minors and undirected graphs]
\label{thm-matrix-tree}
Let $G$ be an undirected weighted graph, $S \subseteq V(G)$. Then
\begin{align*}
 [\LC(G)]_{S,S} &= \sum_{K \in \FC_S} \omega(K)\\
\text{where } \omega(K)&=\prod_{e \in K} \omega(e).
\end{align*}
\end{thm}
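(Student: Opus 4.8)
The plan is to expand the determinant $[\LC(G)]_{S,S}$ directly using the definition of the Laplacian and combinatorics of forests, in the style of the classical Matrix Tree (Kirchhoff) Theorem, which is precisely the special case $|S| = n-1$ highlighted in Figure~\ref{Fs_fig}. The excerpt announces that the proof of Theorem~\ref{thm-matrix-tree} will be split into three lemmas (Lemmas~\ref{lem-cb-minors}, \ref{lem-Laplacian-Incidence} and \ref{lem-charact-MSL}), so I would follow that route. First I would introduce an oriented incidence-type matrix: orient each edge of $G$ arbitrarily and let $B$ be the (weighted) incidence matrix whose columns are indexed by edges, with the two nonzero entries in the column of edge $e = \{i,j\}$ being $\pm\sqrt{|\omega(e)|}$ (with an extra factor of $\sqrt{-1}$ when $\omega(e) < 0$, or more cleanly work with a factorization $\LC(G) = B W B^T$ where $W$ is the diagonal matrix of edge weights). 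The key algebraic input, to be isolated as the first lemma, is the Cauchy--Binet formula: for $S$ with $|S| = k$,
\begin{align*}
  [\LC(G)]_{S,S} \;=\; [B_{S,\cdot}\, W\, B_{S,\cdot}^T]
  \;=\; \sum_{\substack{K \subseteq E(G) \\ |K| = k}} \omega(K)\, \big([B_{S,K}]\big)^2 ,
\end{align*}
where $B_{S,K}$ is the square submatrix of rows in $S$ and columns in $K$ (up to the bookkeeping of the weight signs; with the $BWB^T$ formulation one gets $\omega(K) \cdot ([B_{S,K}]_{\mathrm{unweighted}})^2$ directly).

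The second step, the content of the middle lemma, is the combinatorial evaluation of the unweighted minor $[B_{S,K}]$ for a fixed edge set $K$ with $|K| = |S|$. The claim is that this determinant is $\pm 1$ when $K \in \FC_S$ and $0$ otherwise. For the vanishing case: if $K \notin \FC_S$, then either $K$ contains a cycle, or some connected component of $K$ has all its vertices in $S$; in the first case the columns of $B_{S,K}$ corresponding to a cycle are linearly dependent (the usual alternating-sum-around-a-cycle relation, which survives restriction to rows in $S$ precisely because a cycle has no endpoints outside itself), and in the second case, by Lemma~\ref{lem-fcs-forest}-type counting, a component entirely inside $S$ forces more rows than the incidence structure of that component can support independently — concretely one finds a nontrivial linear combination of rows of $B_{S,K}$ vanishing, or equivalently the $k \times k$ matrix is singular by a rank count (a subforest on $m$ edges touching only vertices in $S$ would need $m+c$ rows for $c$ components but contributes only $m$ columns). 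For the non-vanishing case, when $K \in \FC_S$, Lemma~\ref{lem-fcs-forest} tells us $K$ is a forest each of whose trees contains exactly one vertex outside $S$; I would then argue by a leaf-peeling/triangularization induction: repeatedly pick a leaf of a tree of $K$ that lies in $S$, its row has a single nonzero entry $\pm 1$ in $B_{S,K}$, expand the determinant along that row, and recurse on the smaller forest, until every tree is reduced to its single vertex outside $S$ and no edges remain — giving $|\det| = 1$.

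The third and final step, matching the last lemma, is simply to assemble: substituting $[B_{S,K}]^2 = 1$ for $K \in \FC_S$ and $[B_{S,K}]^2 = 0$ otherwise into the Cauchy--Binet expansion collapses the sum over all $k$-edge subsets to a sum over $\FC_S$, yielding $[\LC(G)]_{S,S} = \sum_{K \in \FC_S} \omega(K)$ as claimed. The main obstacle I anticipate is getting the sign/weight bookkeeping right in the presence of negative edge weights: since $\omega$ may take values in $\RR \setminus \{0\}$ rather than positive reals, one cannot literally take real square roots, so the cleanest path is to avoid incidence matrices over $\RR$ with square roots altogether and instead use the factorization $\LC(G) = B D_\omega B^T$ with $B$ the $\pm 1$ unweighted oriented incidence matrix and $D_\omega$ the diagonal edge-weight matrix; then the weighted Cauchy--Binet gives each term as $\big(\prod_{e \in K} \omega(e)\big)\,[B_{S,K}]^2$ with $[B_{S,K}]$ a genuine integer, and the only remaining work is the purely combinatorial evaluation of that integer minor described above — so the real heart of the proof is the leaf-peeling argument and the cycle/contained-component dependency argument of the middle lemma, both of which are standard in spirit but require care to track that restricting incidence rows to the index set $S$ does not destroy the relevant (in)dependences.
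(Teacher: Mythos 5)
Your proposal is correct and follows essentially the same route as the paper: factor $\LC(G)=MWM^T$ with $M$ the unweighted $\pm1$ oriented incidence matrix and $W$ the diagonal weight matrix, apply Cauchy--Binet, and evaluate $|[M]_{S,K}|$ as $1$ or $0$ according to whether $K\in\FC_S$, using a row-dependence argument for components contained in $S$ and leaf-peeling induction for the forest case. Your resolution of the sign issue for negative weights (keeping the weights in a separate diagonal factor rather than taking square roots) is exactly the paper's choice; the only cosmetic difference is that you additionally treat the cycle case of $K\notin\FC_S$, which is already subsumed by the contained-component case since $|K|=|S|$.
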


This theorem can be proved using combinatorial arguments as in \cite{Chaiken}, but the well known 
proof of the classical matrix tree theorem using the Cauchy-Binet theorem can easily be adapted to
this generalization. To keep the paper self-contained we include the proof. Its structure 
is as follows. The Laplacian of the graph is expressed as the product
of its incidence matrix, its transpose and a weight matrix (Lemma \ref{lem-Laplacian-Incidence}). By the Cauchy-Binet theorem
the minors of this product are expressed as a sum over products of incidence
matrix minors (Lemma \ref{lem-cb-minors}). Finally these minors are interpreted in terms of $\FC_S$ (Lemma \ref{lem-charact-MSL}).

\begin{lem}[Cauchy-Binet-Theorem for minors \cite{Zhang2011}]\label{lem-cb-minors}
 For an $n \times m$ matrix $D$, an $m \times n$ matrix $E$ and index sets $I,J \subseteq \{0,\ldots,n\}$ with $|I|=|J|\leq m$ we have
 \begin{align*}
  [DE]_{I,J} = \sum_{\substack{K \subseteq \{1, \ldots, m\}, |K|=|I|}} [D]_{I,K} [E]_{K,J}
 \end{align*}
\end{lem}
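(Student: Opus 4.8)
The plan is to reduce the statement to the classical Cauchy--Binet theorem for the determinant of a product of two rectangular matrices. First I would note that, writing $M := \{1,\ldots,m\}$ and $k := |I| = |J| \le m$, the submatrix $(DE)_{I,J}$ depends only on the rows of $D$ in $I$ and the columns of $E$ in $J$: the entrywise product formula gives $(DE)_{I,J} = D_{I,M}\,E_{M,J}$, which is the product of the $k \times m$ matrix $P := D_{I,M}$ with the $m \times k$ matrix $Q := E_{M,J}$. Hence $[DE]_{I,J} = \det(PQ)$.

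Next I would apply the classical Cauchy--Binet formula $\det(PQ) = \sum_{K \subseteq M,\ |K| = k} \det(P_{\cdot,K})\,\det(Q_{K,\cdot})$. Unwinding the abbreviations, $P_{\cdot,K}$ is exactly the submatrix $D_{I,K}$ and $Q_{K,\cdot}$ is exactly $E_{K,J}$, so $\det(P_{\cdot,K}) = [D]_{I,K}$ and $\det(Q_{K,\cdot}) = [E]_{K,J}$, which is precisely the claimed identity. (For $k = m$ there is the single summand $K = M$ and the statement is just multiplicativity of $\det$; for $k = n = m$ it is the ordinary Cauchy--Binet theorem.)

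For completeness I would include the short proof of the classical formula, which carries the only real content. Each row $i$ of $PQ$ equals $\sum_{t\in M} P_{it}\,Q_{t,\cdot}$, so multilinearity of the determinant in its rows gives $\det(PQ) = \sum_{\varphi}\Big(\prod_{i=1}^{k}P_{i\varphi(i)}\Big)\det R_\varphi$, where $\varphi$ runs over all maps $\{1,\ldots,k\}\to M$ and $R_\varphi$ denotes the $k\times k$ matrix whose $i$-th row is $Q_{\varphi(i),\cdot}$. A non-injective $\varphi$ contributes $0$, since $R_\varphi$ then has two equal rows; an injective $\varphi$ with image $K$ factors uniquely as $\varphi = \iota_K\circ\sigma$ with $\iota_K$ the increasing enumeration of $K$ and $\sigma\in S_k$, and $\det R_\varphi = \operatorname{sgn}(\sigma)\,\det(Q_{K,\cdot})$. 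Grouping the maps $\varphi$ by their image $K$ and summing over all $\sigma\in S_k$ turns $\sum_\sigma \operatorname{sgn}(\sigma)\prod_i P_{i\,\iota_K(\sigma(i))}$ into $\det(P_{\cdot,K})$, and summing over $K$ yields the formula. The only delicate point is the sign accounting in the reordering step, which is entirely routine; and since the lemma is in any case quoted from \cite{Zhang2011}, a reader may simply take it as known.
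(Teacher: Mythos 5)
Your proof is correct. Note, however, that the paper does not prove this lemma at all: it is quoted directly from the cited reference \cite{Zhang2011} and used as a black box in the proof of Theorem \ref{thm-matrix-tree}, so there is no in-paper argument to compare against. Your reduction of the minor $[DE]_{I,J}$ to $\det(D_{I,M}E_{M,J})$ via the observation that the $(I,J)$ block of a product depends only on the rows of $D$ indexed by $I$ and the columns of $E$ indexed by $J$ is exactly the right (and essentially only) step needed to pass from the classical rectangular Cauchy--Binet formula to the minor version stated here, and your multilinearity proof of the classical formula -- expanding each row of $PQ$ as a linear combination of the rows of $Q$, discarding non-injective index maps because of repeated rows, and regrouping the injective ones by their image with the usual sign bookkeeping -- is the standard complete argument. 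The only cosmetic point worth flagging is that the index sets in the statement should read $I,J\subseteq\{1,\ldots,n\}$ rather than $\{0,\ldots,n\}$ (a typo in the paper), which your proof implicitly assumes.
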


For the proof of Theorem  \ref{thm-matrix-tree} we first have to introduce some notation:
First we impose an arbitrary orientation on $G$. Note that the proofs of \ref{lem-Laplacian-Incidence} and \ref{lem-charact-MSL} are independent of this orientation since the entries $M_{ie}$ of $M$ occur only in expressions of the form $M_{ie} M_{je}$ or $M_{ie}+M_{je}$ for some $e \in \{1,\ldots,|E|\}$.

The incidence-matrix $M \in \{-1,0,1\}^{n \times |E(G)|}$ of $G$ is then defined by
\begin{align*}
 M_{ie}=\begin{cases}
         1 & \text{ if } e \text{ starts at } i \\
         -1 & \text{ if } e \text{ ends at } i \\
         0 & \text{ if } i \not\in e\\
        \end{cases}.
\end{align*}
Additionally, we define the weight matrix $W \in \RR^{|E(G)|\times |E(G)|}$ as the diagonal matrix
with $W_{ee}=\omega(e)$.

The importance of these incidence matrices for our application arises from the following two lemmas.

\begin{lem}[Representation of $\LC(G)$ by the incidence and weight matrix]\label{lem-Laplacian-Incidence}
 \begin{align*}
  MWM^T = \LC(G)
 \end{align*}
\end{lem}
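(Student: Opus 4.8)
The plan is to verify the matrix identity $MWM^T = \LC(G)$ entrywise, since both sides are $n \times n$ matrices indexed by the vertex set $V(G) = \{1,\ldots,n\}$. First I would write out the $(i,j)$ entry of the left-hand side using the definition of matrix multiplication: since $W$ is diagonal with $W_{ee} = \omega(e)$, we get
\begin{align*}
 (MWM^T)_{ij} = \sum_{e \in E(G)} M_{ie}\, W_{ee}\, M_{je} = \sum_{e \in E(G)} M_{ie} M_{je}\, \omega(e).
\end{align*}
Now I would split this sum according to the structure of the incidence matrix $M$. Each edge $e$ contributes a term only if both $i$ and $j$ lie on $e$ (otherwise one of $M_{ie}, M_{je}$ vanishes, recalling that loops have been removed or contribute nothing by Remark~\ref{rem-adjlap}(a)).

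For the \textbf{off-diagonal case} $i \neq j$: the only edge that can contribute is $e = \{i,j\}$ (if it exists in $E(G)$). For that edge, one endpoint is the ``start'' and the other the ``end'', so $M_{ie} M_{je} = (1)(-1) = -1$ regardless of the chosen orientation. Hence $(MWM^T)_{ij} = -\omega(\{i,j\}) = -\AC(G)_{ij}$ if $\{i,j\} \in E(G)$, and $0$ otherwise; in both cases this equals $-\AC(G)_{ij} = \LC(G)_{ij}$ since the degree matrix $\DC(G)$ is diagonal.

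For the \textbf{diagonal case} $i = j$: an edge $e$ contributes $M_{ie}^2\, \omega(e)$, which equals $\omega(e)$ whenever $i \in e$ and $0$ otherwise. Summing over all edges incident to $i$ gives $\sum_{e \ni i} \omega(e) = (\AC(G)\ones)_i = \DC(G)_{ii}$, which is exactly the diagonal entry of $\LC(G) = \DC(G) - \AC(G)$ (the $\AC(G)$ diagonal being zero as there are no loops). Combining the two cases yields $MWM^T = \DC(G) - \AC(G) = \LC(G)$.

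The computation is entirely routine; the only point requiring a word of care is the orientation-independence, namely that the product $M_{ie} M_{je}$ is well-defined irrespective of the arbitrary orientation imposed on $G$ — this was already flagged in the remark preceding the lemma, since $M$ enters only through expressions $M_{ie} M_{je}$. For $i \neq j$ the product is always $-1$; for $i = j$ it is always $+1$; so no choice matters. I do not anticipate any genuine obstacle here — the main subtlety is simply bookkeeping which edges are incident to which vertices and checking the loop convention is harmless.
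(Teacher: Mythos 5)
Your proposal is correct and follows essentially the same route as the paper: an entrywise computation of $(MWM^T)_{ij}=\sum_e M_{ie}W_{ee}M_{je}$, split into the off-diagonal case (yielding $-\omega(\{i,j\})$) and the diagonal case (yielding $\sum_{e\ni i}\omega(e)$), with the orientation-independence observation already noted in the text preceding the lemma. No gaps.
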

\begin{proof} 
For the off-diagonal entries $(MWM^T)_{ij}$ with $i \neq j$ we have
 \begin{align*}
  (MWM^T)_{ij}=&\sum_{e \in E(G)} \underbrace{M_{ie} W_{ee} M_{je}}_{=:*}\\
*&=\begin{cases}
                                                          0 & \text{ if } e \neq \{i,j\} \\
                                                          -\omega(e) & \text{ if } e = \{i,j\}
                                                          \end{cases},\\
 \end{align*}
which equals $\LC(G)_{ij}$. The same holds for the diagonal entries
 \begin{align*}
             (MWM^T)_{ii}=&\sum_{e \in E(G)} M_{ie}^2 W_{ee} = \sum_{\substack{e \in E(G) \\ i \in e}} \omega(e)=\LC(G)_{ii}. \qedhere
 \end{align*}
\end{proof}

\begin{lem}[Characterization of the incidence matrix minors via $\FC_S$]
\label{lem-charact-MSL}
 \begin{align*}
  | [M]_{S,K} | = \begin{cases} 1 &\text{ if } K \in \FC_S \\ 0 &\text{ if } K \not\in \FC_S \end{cases} \quad\text{ for all } S \subseteq V, K \subseteq E, |K|=|S|
 \end{align*}
\end{lem}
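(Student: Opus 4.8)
The plan is to analyze the square submatrix $M_{S,K}$ of the incidence matrix by exploiting the combinatorial structure of $K$ as a subgraph, and to argue separately depending on whether $K \in \FC_S$ or not. First I would recall that $[M]_{S,K}$ is the determinant of the $|S| \times |S|$ matrix whose rows are indexed by vertices in $S$ and whose columns are indexed by edges in $K$, with entries in $\{-1,0,1\}$. The key observation is that if $K \notin \FC_S$, then some connected component $C$ of the subgraph induced by $K$ has $V(C) \subseteq S$; restricting to the rows in $V(C)$ and the columns in $E(C)$, we get a submatrix each of whose columns sums to zero (every edge of $C$ has both endpoints in $V(C)$, contributing $+1$ and $-1$), so these rows are linearly dependent, forcing $[M]_{S,K} = 0$. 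Here I am using that $|V(C) \cap S| = |V(C)| = |E(C)|$ only when $C$ is a tree, and more generally $|V(C)| \le |E(C)|$ for a connected graph, so a counting argument as in Lemma~\ref{lem-fcs-forest} shows that $K \notin \FC_S$ forces at least one such component fully contained in $S$.

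Next, for the case $K \in \FC_S$, I would use Lemma~\ref{lem-fcs-forest}: $K$ is a forest and each tree $T_1,\dots,T_r$ in $K$ contains exactly one vertex outside $S$. Then $M_{S,K}$ is block-diagonal after permuting rows and columns to group them by tree, where the block for $T_j$ is indexed by $V(T_j) \cap S$ (rows) and $E(T_j)$ (columns), and both index sets have size $|V(T_j)| - 1 = |E(T_j)|$, so each block is square. Since $\det M_{S,K} = \pm \prod_j \det(\text{block}_j)$, it suffices to show each block has determinant $\pm 1$. I would prove this by induction on $|V(T_j)|$ (or equivalently on $|E(T_j)|$): a tree has a leaf $\ell$; if $\ell \notin S$, then the unique vertex outside $S$ is $\ell$, and the rows of the block are indexed by the non-leaf… — more carefully, pick a leaf $\ell$ that lies in $S$ (such a leaf exists unless the tree is a single edge, since only one vertex is outside $S$ and a tree with $\ge 2$ edges has $\ge 2$ leaves), incident to a unique edge $e$; the row for $\ell$ has a single nonzero entry $\pm 1$ in column $e$, so expanding the determinant along that row reduces to the block for the tree $T_j \setminus \{\ell\}$ with $e$ removed, which is again a tree with exactly one vertex outside $S \setminus \{\ell\}$. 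The base case (single edge, one endpoint in $S$) gives a $1 \times 1$ matrix with entry $\pm 1$.

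The main obstacle I anticipate is the bookkeeping in the inductive step: I must make sure that removing a leaf $\ell \in S$ and its edge $e$ keeps the sub-block square and keeps the hypothesis ``exactly one vertex outside $S$'' intact, and that such a leaf in $S$ always exists. The latter is the delicate point — if the tree has at least two edges it has at least two leaves, and since only one vertex lies outside $S$, at least one leaf lies in $S$; if the tree has exactly one edge, its two endpoints are one inside and one outside $S$, so the block is $1\times 1$ and we are in the base case. I would also remark that the sign ambiguity is genuinely present (it depends on the chosen orientation and on the ordering of rows/columns), which is why the statement only claims $|[M]_{S,K}| = 1$, and note that this is consistent with the orientation-independence remark preceding the lemma, since in Theorem~\ref{thm-matrix-tree} these minors enter only through products $[M]_{S,K}^2$ via Cauchy--Binet and Lemma~\ref{lem-Laplacian-Incidence}. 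A cleaner alternative to the leaf induction would be to order the edges of $K$ so that the resulting matrix is triangular with $\pm 1$ on the diagonal, but the leaf-deletion argument is the most transparent and I would present that.
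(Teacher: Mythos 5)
Your proof is correct and follows essentially the same route as the paper: for $K \notin \FC_S$ the rows of $M_{S,K}$ indexed by a connected component contained entirely in $S$ sum to zero, and for $K \in \FC_S$ one finds a leaf of some tree that lies in $S$, expands the determinant along its (single-nonzero-entry) row, and inducts. Your block-diagonal organization by trees is only a presentational variant of the paper's direct induction on $|S|$, and your extra care about the existence of a leaf in $S$ and the squareness of each block fills in details the paper leaves implicit.
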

\begin{proof}
($K \not\in \FC_S \Rightarrow [M]_{S,K}=0$)
 If $K \not \in \FC_S$, there is a connected component $C$ of $K$ that has no vertex outside $S$. Then the rows of $M_{S,K}$ corresponding
         to these vertices, sum up to $0$.

($K \in \FC_S \Rightarrow [M]_{S,K}=1$)
      We prove this direction by induction on $|S|$. If $|S|=1$ and $K \in \FC_S$ then $K$ is a single edge between a vertex
      in $S$ and one outside $S$, so $|[M]_{S,K}|=1$. If $|S| > 1$ then by Lemma \ref{lem-fcs-forest} there is a tree in $K$, which
      has at least one leaf $i$ in $S$ and an edge $e \in K$ incident with $i$ which implies that $K \setminus \{e\} \in \FC_{S \setminus \{i\}}$. Since the row of $M_{S,K}$ corresponding to $i$ contains only one nonzero element,
      by expansion of the determinant along this row 
      $|[M]_{S,K}|=|[M]_{S \setminus \{i\},K \setminus \{e\}}|$ which implies the assertion by induction. \qedhere

\end{proof}

\begin{figure}
\includegraphics[width=\textwidth]{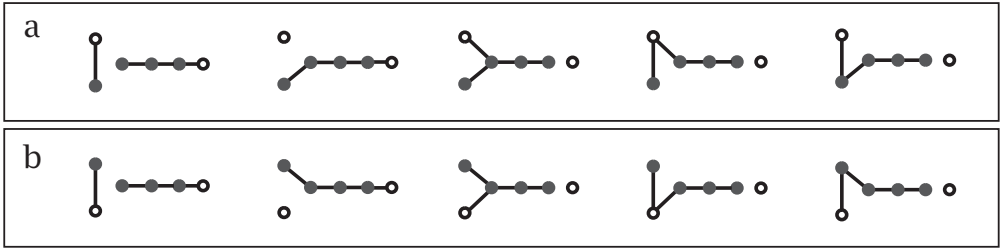}
\caption{Illustration of the induction step used in the proof of Lemma~\ref{lem-charact-MSL}. 
Consider the graph $G$ from Fig.~\ref{Fs_fig}. As shown in Fig.~\ref{Fs_fig}(b), every tree in $\FC_S$, $S=\{1, 2,3,4,5\}$, has at least one leaf $i$ in $S$ and an edge $e \in L$ incident with $i$. Comparison with the sets $\FC_{S \setminus \{1\}}$, $\FC_{S \setminus \{2\}}$, displayed in (a), (b) respectively, reveals that for all trees and all $i$, $K \setminus \{e\}\in\FC_{S \setminus \{i\}}$. \label{Induction_fig}}
\end{figure}
We are now ready to proof Theorem \ref{thm-matrix-tree}.

\begin{proof}[Proof of Theorem \ref{thm-matrix-tree}] 
Using Lemma 9, Lemma 8 and Lemma 10, we get
\begin{align*}
 [\LC(G)]_{S,S} = [MWM^T]_{S,S} &= \sum_{\substack{K \subseteq E \\ |K|=|S|}} [MW]_{S,K} [M^T]_{K,S} \\
                               &= \sum_{K} [M]_{S,K}^2 \prod_{e \in K} \omega(e) \\
                               &= \sum_{K \in \FC_S}  \prod_{e \in K} \omega(e) \\
                               &= \sum_{K \in \FC_S}  \omega(K). \qedhere \\
\end{align*}
\end{proof}

\section{Cutting a Forest}
We now have a convenient characterization of the minors of a zero-row-sum symmetric matrix $L \in \RR^{n \times n}$ in terms of forests contained in $G=\GC(-L)$. 
To show that a given structure of $G$ is incompatible with the positive semi-definiteness of $L$, one needs to show that the inequalities $[L]_{S,S}>0$, $S \varsubsetneq \{1,\ldots,n\}$, cannot simultaneously be fulfilled.

However, if one can show that e.g.\ $\sum_{S} [L]_{S,S}\leq 0$, one may dispense with the explicit calculation of any of the inequalities as well as with the compatibility verification.
In this section, we prove a combinatorial identity that is central for this type of calculation. 

Let $V_1,V_2$  be a partition of the vertex set of $G$ into two non-empty parts.
For $B\subseteq V_1$ define
\begin{align*}
  \Sigma_B
  &=
  \big\{
  D \subseteq E \cap E(V_1, V_2) \;\big|\; D \text{ is a forest and}
\\ 
  & \phantom{= \big\{D \subseteq E \cap E(V_1,V_2) \setsep | \,}
  \text{contains exactly one edge } e \text{ for each } i \in B \text{ with } i \in e\big\} 
  ,
\\
  T_B
  &=
  \big\{D \subseteq E \cap E(V_1,V_1) \;\big|\; |D|=|V_1|-|B|, D \text{ is a forest and }
\\
  & \phantom{=\{D \subseteq E \cap E(V_1,V_1) \setsep |} 
  \text{ each tree contains exactly one vertex of } B
  \big\}
  . 
\end{align*}

\begin{lem}[Cutting a forest]\label{lem-cutting}  
Let $C \subseteq V_1$. Then
 \begin{align*}
 \FC_{V_1 \setminus C} = \bigcup_{\substack{B \subseteq V_1 \setminus C \\
                                            \Sigma_B \neq \emptyset \\
                                            T_{C \cup B} \neq \emptyset \\
                                            }} \{A \cup A' \setsep A \in \Sigma_B, A' \in T_{C \cup B}\}.
\end{align*}
\end{lem}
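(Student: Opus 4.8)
The plan is to prove the set equality in Lemma~\ref{lem-cutting} by establishing both inclusions, using the characterizations of $\FC_S$ provided by Lemma~\ref{lem-fcs-forest}. Throughout, I will exploit the partition $E = E(V_1,V_1) \sqcup E(V_1,V_2) \sqcup E(V_2,V_2)$, although the third piece never arises since elements of $\FC_{V_1 \setminus C}$ will turn out to use no edges inside $V_2$.

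First I would prove $\supseteq$. Take $B \subseteq V_1 \setminus C$ with $\Sigma_B \neq \emptyset$ and $T_{C \cup B} \neq \emptyset$, and pick $A \in \Sigma_B$, $A' \in T_{C \cup B}$; set $K = A \cup A'$. I claim $K \in \FC_{V_1 \setminus C}$. The cardinality is $|K| = |A| + |A'| = |B| + (|V_1| - |C \cup B|) = |V_1| - |C| = |V_1 \setminus C|$, using $B \cap C = \emptyset$; note $A$ and $A'$ are edge-disjoint since one lives in the cut and the other inside $V_1$. It then remains to check that every connected component of $K$ meets the complement $V \setminus (V_1 \setminus C) = C \cup V_2$. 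Here is where I must argue carefully: a component of $K$ is obtained by gluing trees of $A'$ (each containing exactly one vertex of $C \cup B$, by definition of $T_{C\cup B}$) to edges of $A$ (each cut-edge attaching a vertex of $B$ to a vertex of $V_2$, by definition of $\Sigma_B$). A component not meeting $C$ and not meeting $V_2$ would consist only of vertices in $(V_1 \setminus C)$ plus possibly vertices of $B$; but every $A'$-tree inside it carries a vertex of $C \cup B$, forcing it to carry a vertex of $B$, and every such $B$-vertex is matched by $\Sigma_B$ to an incident cut-edge in $A$ reaching $V_2$ — contradiction. So each component meets $C \cup V_2$, hence $K \in \FC_{V_1 \setminus C}$.

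Next the inclusion $\subseteq$, which is the heart of the matter. Given $K \in \FC_{V_1 \setminus C}$, decompose $K = A \cup A'$ where $A = K \cap E(V_1,V_2)$ and $A' = K \cap E(V_1,V_1)$; I first observe $K \cap E(V_2,V_2) = \emptyset$, because any component living entirely in $V_2$ would already meet the complement but would also then have to satisfy the counting bound from Lemma~\ref{lem-fcs-forest}(a)/(b) — more directly, define $B := \{ i \in V_1 \setminus C : i \text{ is incident to some edge of } A \}$ and verify that $B$ together with the edge sets $A, A'$ realize the decomposition. The key structural facts to extract, using that $K$ is a forest (Lemma~\ref{lem-fcs-forest}(a)) and that each tree of $K$ contains exactly one vertex outside $V_1 \setminus C$, i.e.\ exactly one vertex of $C \cup V_2$ (Lemma~\ref{lem-fcs-forest}(b)): (1) $A$ is a forest of cut-edges, and a vertex $i \in V_1$ can be incident to at most one edge of $A$, else its tree would contain two vertices of $V_2$; this shows $A \in \Sigma_B$. (2) $A'$ is a forest; I must show $|A'| = |V_1| - |C \cup B|$ and that each tree of $A'$ contains exactly one vertex of $C \cup B$. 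The cardinality follows from $|K| = |V_1| - |C|$ and $|A| = |B|$ (each $i \in B$ gives exactly one cut-edge, and no cut-edge is unaccounted for). For the tree condition on $A'$: take a tree $t'$ of $A'$; it sits inside some tree $t$ of $K$; since $t$ has exactly one vertex in $C \cup V_2$, and the $V_2$-vertices of $t$ are reached only via cut-edges of $A$, the tree $t'$ either contains the unique $C$-vertex of $t$ (and then no $B$-vertex, since a $B$-vertex of $t'$ would be attached by a cut-edge in $t$ leading to a $V_2$-vertex, giving $t$ two outside-vertices) or $t'$ contains exactly one $B$-vertex (the attachment point of the cut-edge going to $V_2$) and no $C$-vertex. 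Hence each $A'$-tree meets $C \cup B$ in exactly one vertex, so $A' \in T_{C \cup B}$; in particular $\Sigma_B \ne \emptyset$ and $T_{C\cup B} \ne \emptyset$, so $K = A \cup A'$ lies in the right-hand side.

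I would then close with a one-line remark that the union on the right is in fact disjoint over $B$ — since $B$ is recovered from $K$ as the set of $(V_1 \setminus C)$-endpoints of cut-edges — though disjointness is not needed for the stated equality and can be omitted or deferred to the application in Theorem~\ref{thm-identity}. The main obstacle I anticipate is the bookkeeping in the $\subseteq$ direction: one must be disciplined about the phrase ``exactly one vertex outside $S$'' from Lemma~\ref{lem-fcs-forest}(b) and track how that single outside-vertex of each $K$-tree is apportioned — either it is a $C$-vertex sitting in one $A'$-subtree, or it is a $V_2$-vertex dangling off a $B$-vertex — and to rule out, via the forest property, a vertex of $V_1$ carrying two cut-edges. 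Everything else is routine edge-counting.
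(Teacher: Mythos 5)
Your proof is correct and follows essentially the same route as the paper's: decompose $K$ into its cut-edges $K\cap E(V_1,V_2)$ and internal edges $K\cap E(V_1,V_1)$, recover $B$ as the set of $(V_1\setminus C)$-endpoints of cut-edges, and verify the cardinality and ``exactly one outside vertex per tree'' conditions via Lemma~\ref{lem-fcs-forest}. You simply spell out in detail the case analysis that the paper's terser proof leaves implicit.
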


\begin{proof}
$(\subseteq)$. For $K \in \FC_{V_1 \setminus C}$ we clearly have $K \cap E(V_1,V_2) \in \Sigma_B$ where $B$
is the set of all indices of $V_1$ incident to an edge of $K$ between $V_1$ and $V_2$.
By definition of $\FC_{V_1 \setminus C}$ we also have $K \cap E(V_2,V_2) = \emptyset$ and 
therefore $|K \cap E(V_1,V_1)|=|V_1 \setminus C|-|B|$. This gives $K \cap E(V_1,V_1) \in
T_{C \cup B}$.

$(\supseteq)$. On the other hand let $K_S \in \Sigma_B$ and $K_T \in T_{B \cup C}$. Then 
each tree in $K:=K_S \cup K_T$ contains exactly one vertex not in $V_1 \setminus C$ and
$|K|=|V_1| - |C \cup B|+ |B|=|V_1 \setminus C|$. Therefore $K \in \FC_{V_1 \setminus C}$.\qedhere
\end{proof}

This decomposition of $[\LC(G)]_{V_1 \setminus C,V_1 \setminus C}$ allows us to prove the following remarkable identity.

\begin{thm}\label{thm-identity}
 \begin{align*}
  \sum_{C \subseteq V_1} (-1)^{|C|} {\omega(\Sigma_{C})} [\LC(G)]_{V_1 \setminus C,V_1 \setminus C} = 0
 \end{align*}
\end{thm}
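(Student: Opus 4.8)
I would prove the identity by expanding each principal minor via the Matrix Tree Theorem (Theorem~\ref{thm-matrix-tree}) and then applying the forest-cutting decomposition of Lemma~\ref{lem-cutting}, so that the whole left-hand side becomes a sum over triples $(C, B, K)$ where $C \subseteq V_1$, $B \subseteq V_1 \setminus C$, and $K = A \cup A'$ with $A \in \Sigma_B$ a forest of cut edges and $A' \in T_{C \cup B}$ a forest inside $V_1$. The key observation is that the weight $\omega(K) = \omega(A)\omega(A')$ factors, and $\omega(\Sigma_C)$ is itself a sum over forests of cut edges meeting exactly the vertices of $C$; the goal is to collect terms and exhibit a sign-reversing involution that cancels everything.

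First I would write, using Theorem~\ref{thm-matrix-tree} and then Lemma~\ref{lem-cutting},
\begin{align*}
  \sum_{C \subseteq V_1} (-1)^{|C|} \omega(\Sigma_C) [\LC(G)]_{V_1 \setminus C, V_1 \setminus C}
  &= \sum_{C \subseteq V_1} (-1)^{|C|} \Big(\sum_{A_0 \in \Sigma_C} \omega(A_0)\Big)
     \sum_{\substack{B \subseteq V_1 \setminus C}} \sum_{\substack{A \in \Sigma_B \\ A' \in T_{C \cup B}}} \omega(A)\omega(A').
\end{align*}
Here $\Sigma_C$ should be read with $C$ playing the role of $B$ in the definition of $\Sigma_B$ (a forest of cut edges hitting exactly the vertices of $C$). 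Now $A_0$ and $A$ are both sub-forests of $E \cap E(V_1,V_2)$, with $A_0$ covering exactly $C \subseteq V_1$ and $A$ covering exactly $B \subseteq V_1 \setminus C$; in particular $A_0$ and $A$ are vertex-disjoint on the $V_1$-side. So the pair $(A_0, A)$ assembles into a single forest $\hat A := A_0 \cup A$ of cut edges covering exactly $C \cup B$ on the $V_1$-side — but one must check $\hat A$ is still a forest (it is: two forests of cut edges with disjoint $V_1$-endpoints and all other endpoints in $V_2$ cannot create a cycle, since any cycle alternating between $V_1$ and $V_2$ would need two edges at some $V_1$-vertex). Reindexing by $(D := C \cup B, \hat A, A')$ where $\hat A \in \Sigma_D$ and $A' \in T_D$, the sum becomes
\begin{align*}
  \sum_{D \subseteq V_1} \Big(\sum_{\hat A \in \Sigma_D} \omega(\hat A)\Big)\Big(\sum_{A' \in T_D} \omega(A')\Big)
  \Big(\sum_{\substack{C \subseteq D}} (-1)^{|C|}\Big),
\end{align*}
because for fixed $D$ the choices of $C \subseteq D$ (with $B = D \setminus C$) range over all subsets of $D$ and contribute only the sign $(-1)^{|C|}$, the weights $\omega(A_0)\omega(A) = \omega(\hat A)$ being independent of how $D$ is split. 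The inner sum $\sum_{C \subseteq D} (-1)^{|C|}$ equals $0$ whenever $D \neq \emptyset$ and equals $1$ when $D = \emptyset$; but for $D = \emptyset$ we have $\Sigma_\emptyset$ containing only the empty forest (weight $1$) while $T_\emptyset$ requires a forest in $E(V_1,V_1)$ with $|V_1|$ components each containing exactly one vertex of $\emptyset$ — impossible since $V_1 \neq \emptyset$ — so $T_\emptyset = \emptyset$ and that term vanishes too. Hence the total is $0$.

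The main obstacle I anticipate is the bookkeeping at the two "gluing" steps: verifying that $A_0 \cup A$ is genuinely a forest and that the reindexing $(C, B, A_0, A, A') \leftrightarrow (D, C, \hat A, A')$ is a bijection which respects $\Sigma$ and $T$ membership and multiplies weights correctly — in particular that every $\hat A \in \Sigma_D$ arises from a unique pair $(A_0, A)$ once the split $D = C \sqcup B$ is fixed (this is immediate since $\hat A$ decomposes uniquely as the edges hitting $C$ and the edges hitting $B$). One should also be careful that the conditions "$\Sigma_B \neq \emptyset$" and "$T_{C\cup B} \neq \emptyset$" appearing in Lemma~\ref{lem-cutting} are harmless: terms with an empty index set contribute $0$ anyway, so dropping those restrictions in the rewriting is legitimate. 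Once these combinatorial identifications are in place, the vanishing is just the binomial identity $\sum_{C \subseteq D}(-1)^{|C|} = [D = \emptyset]$ together with $T_\emptyset = \emptyset$.
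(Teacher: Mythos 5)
Your proof is correct and follows essentially the same route as the paper: expand the minor via Theorem \ref{thm-matrix-tree} and Lemma \ref{lem-cutting}, use the multiplicativity $\omega(\Sigma_C)\omega(\Sigma_B)=\omega(\Sigma_{C\cup B})$ for disjoint $C,B$ to reindex by $D=C\cup B$, and finish with $\sum_{C\subseteq D}(-1)^{|C|}=0$. You are in fact slightly more careful than the paper in justifying the gluing bijection and in disposing of the $D=\emptyset$ term via $T_\emptyset=\emptyset$.
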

\begin{proof} 
From Lemma \ref{lem-cutting} we can derive 
\begin{align*}
 [\LC(G)]_{V_1 \setminus C,V_1 \setminus C}&=\sum_{K \in \FC_{V_1 \setminus C}} \omega(K) \\
             &=\sum_{B \subseteq V_1 \setminus C} \sum_{\substack{K_S \in \Sigma_B \\ K_T \in T_{B \cup C}}} \omega(K_S) \omega(K_T)\\
             &=\sum_{B \subseteq V_1 \setminus C} \left(\sum_{K_S \in \Sigma_B} \omega(K_S)\right)\left(\sum_{K_T \in T_{B \cup C}} \omega(K_T)\right)\\
             &=\sum_{B \subseteq V_1 \setminus C} {\omega(\Sigma_B)} {\omega(T_{B \cup C})},\\
  \sum_{C \subseteq V_1} (-1)^{|C|} {\omega(\Sigma_{C})} [\LC(G)]_{V_1 \setminus C,V_1 \setminus C} &= \sum_{C \subseteq V_1} (-1)^{|C|} {\omega(\Sigma_{C})} \sum_{B \subseteq V_1 \setminus C} {\omega(\Sigma_B)} {\omega(T_{B \cup C})} \\
&=  \sum_{\substack{C \subseteq V_1\\ B \subseteq V_1 \setminus C}} (-1)^{|C|} {\omega(\Sigma_{B \cup C})} {\omega(T_{B \cup C})} \\
&=  \sum_{\substack{A \subseteq V_1\\ C \subseteq A}} (-1)^{|C|} {\omega(\Sigma_{A})} {\omega(T_{A})} \\
&=  \sum_{A \subseteq V_1}  {\omega(\Sigma_{A})} {\omega(T_{A})} \sum_{C \subseteq A} (-1)^{|C|}=0, \\
\end{align*}
where we use the fact that \[\sum_{C \subseteq A} (-1)^{|C|} = \sum_{k=1}^{|A|} \binom{|A|}{k} (-1)^{k} 1^{|A|-k} =(-1+1)^{|A|}=0.\qedhere\] 
\end{proof}

\section{Positive Spanning Trees}

A weighted graph is said to be \emph{positive} or \emph{negative}, if each of its edge weights is positive or negative, resp.

\begin{lem}
 Let $G$ be a weighted connected graph. Then $G$ has a positive spanning tree iff it has no 
negative cut.
\end{lem}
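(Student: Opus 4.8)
The plan is to prove the contrapositive in one direction and a direct greedy/exchange argument in the other. First I would establish the easy direction: if $G$ has a negative cut $E(V_1,V_2)$, then any spanning tree $T$ must contain at least one edge crossing between $V_1$ and $V_2$ (otherwise $T$ would be disconnected), and every such crossing edge lies in $E(V_1,V_2)$ and hence has negative weight. So $T$ cannot be positive. This shows ``positive spanning tree $\Rightarrow$ no negative cut'' with almost no work.

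For the converse I would argue that if $G$ has no negative cut, then a positive spanning tree can be built greedily. The cleanest route is a matroid/exchange argument on the cycle space: start with any spanning tree $T$ of $G$ (which exists since $G$ is connected). If $T$ is positive we are done; otherwise pick an edge $e \in T$ with $\omega(e) < 0$. Removing $e$ from $T$ splits it into two components with vertex sets $V_1, V_2$, and the set of edges of $G$ crossing this partition is a cut. By hypothesis this cut is not negative, so it contains an edge $f$ with $\omega(f) > 0$; note $f \neq e$. Then $T' := (T \setminus \{e\}) \cup \{f\}$ is again a spanning tree (standard basis-exchange for the graphic matroid), and it has strictly fewer negative edges than $T$. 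Iterating this process at most $|E(T)|$ times yields a positive spanning tree.

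The one point that needs care — and which I expect to be the main obstacle — is arguing that the greedy exchange actually terminates with a \emph{positive} tree rather than cycling: each exchange must be shown to strictly decrease the number of negative-weight edges, which is immediate here since we delete a negative edge $e$ and add a positive edge $f$, and the remaining edges of $T$ are untouched. One should also double-check the degenerate cases: if $G$ has a single vertex the statement is vacuous (empty spanning tree, no cuts), and one must confirm that when $T$ has a negative edge the corresponding fundamental cut is genuinely a cut in the sense of the paper's definition (a partition into two \emph{non-empty} parts), which holds because deleting a tree edge always produces two non-empty components.

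Finally, I would remark that this lemma is the combinatorial engine behind the paper's main theorem: combined with Theorem~\ref{thm-matrix-tree} (for $|S| = n-1$, $[\LC(G)]_{S,S}$ is the sum of $\omega(T)$ over spanning trees $T$) and Theorem~\ref{thm-innerminors}, the existence of a negative cut in some connected component of $\GC(-L)$ forces a vanishing or wrong-signed principal minor, obstructing positive semi-definiteness; so the lemma should be stated with an eye toward applying it componentwise.
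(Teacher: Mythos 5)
Your proof is correct, and the forward direction is essentially identical to the paper's (a positive spanning tree must cross every cut with a positive edge). For the converse, however, you take a genuinely different route. The paper simply deletes all negative edges and observes that, since no cut is negative, every cut of $G$ still contains an edge in the remaining subgraph; hence that subgraph is connected and any of its spanning trees is a positive spanning tree of $G$. You instead run a basis-exchange argument on the graphic matroid: start from an arbitrary spanning tree, and repeatedly swap a negative tree edge $e$ for a positive edge $f$ in the fundamental cut determined by $e$, the non-negativity hypothesis guaranteeing that such an $f$ exists. Both arguments are sound. The paper's version is shorter and makes the underlying fact transparent (connectivity is equivalent to all cuts being non-empty, so restricting to the positive edges loses nothing); yours is slightly longer but has the merit of producing a positive spanning tree by local modifications of any given spanning tree, and your termination argument (each exchange strictly decreases the number of negative edges) and your check that deleting a tree edge yields a partition into two non-empty parts are exactly the points that need care and are handled correctly. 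Your closing remark about how the lemma feeds into Theorem~\ref{thm:Pos_spanningtree} is in the right spirit, though note that the paper's proof of that theorem argues directly with the quadratic form $v^TAv$ for the indicator vector of $V_1$ rather than through the matrix tree theorem; the minor-based derivation appears only in the subsequent remark for the rank-$(n-1)$ case.
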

\begin{proof}
 Take an arbitrary cut of $G$ given by the partition $V_1, V_2$ of $V$. If $G$ has a positive spanning
 tree, then there must be an edge from $V_1$ to $V_2$ with positive weight, so the cut is not negative.
 If on the other hand there is no negative cut, remove all negative edges from $G$. Then
 each cut still contains an edge and therefore the remaining graph contains a spanning tree with
 all edge weights positive.
\end{proof}

From now on let $A$ be a real symmetric negative semi-definite matrix with zero row sum.
\begin{thm}\label{thm:Pos_spanningtree}
If $G=\GC(A)$ is connected then
$\GC(A)$ has a positive spanning tree.
\end{thm}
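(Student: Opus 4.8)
The plan is to show the contrapositive in the form: if the connected graph $G = \GC(A)$ has no positive spanning tree, then $A$ is not negative semi-definite. By the lemma just proved, "no positive spanning tree" is equivalent (for connected $G$) to "$G$ has a negative cut", so I may fix a partition $V_1, V_2$ of $V$ with every edge in $E(V_1,V_2)$ carrying negative weight (with respect to $\omega = \AC(A)$, i.e.\ negative entries of $A$). Since $A$ negative semi-definite is equivalent to $L := \LC(\GC(A)) = -A$ positive semi-definite, and since $A$ has zero row sum so does $L$ (Remark \ref{rem-adjlap}(b)), it suffices by Theorem \ref{thm-innerminors} to exhibit a proper subset $S \varsubsetneq \{1,\dots,n\}$ with $[L]_{S,S} \le 0$ — or, more robustly, to derive a contradiction from assuming $[L]_{S,S} > 0$ for all such $S$.

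Here is where Theorem \ref{thm-identity} enters. Apply it with this $V_1$: it gives
\begin{align*}
 \sum_{C \subseteq V_1} (-1)^{|C|}\, \omega(\Sigma_C)\, [\LC(G)]_{V_1 \setminus C, V_1 \setminus C} = 0.
\end{align*}
The term $C = \emptyset$ contributes $\omega(\Sigma_\emptyset)\,[\LC(G)]_{V_1,V_1} = [\LC(G)]_{V_1,V_1}$, since $\Sigma_\emptyset = \{\emptyset\}$ has weight $1$. For every nonempty $C \subseteq V_1$, the factor $\omega(\Sigma_C)$ is a sum of products $\omega(D)$ over forests $D \subseteq E(V_1,V_2)$ each of whose edges is negative; a forest $D \in \Sigma_C$ uses exactly one edge per vertex of $C$, so it has exactly $|C|$ edges, hence $\omega(D) = \prod_{e \in D}\omega(e)$ has sign $(-1)^{|C|}$. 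Therefore $(-1)^{|C|}\omega(\Sigma_C) \ge 0$ for every $C$, and in fact $> 0$ whenever $\Sigma_C \ne \emptyset$. If additionally we had $[\LC(G)]_{V_1 \setminus C, V_1 \setminus C} > 0$ for every $C \subsetneq V_1$ (which is forced by positive semi-definiteness of $L$ and $\operatorname{rank} A = n-1$ via Theorem \ref{thm-innerminors}(ii)), then the identity would read $(\text{strictly positive } C=\emptyset \text{ term}) + (\text{nonnegative terms for } C \ne \emptyset, V_1) + (\text{the } C = V_1 \text{ term}) = 0$. The $C = V_1$ term involves $[\LC(G)]_{\emptyset,\emptyset} = 1$ times $(-1)^{|V_1|}\omega(\Sigma_{V_1})$, which is $\ge 0$. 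So the whole left side is strictly positive — a contradiction. One must also worry about the degenerate possibility that $\operatorname{rank} A < n-1$, i.e.\ $0$ is not a simple eigenvalue of $A$; but $A$ connected with zero row sum forces $0$ to be simple (the kernel of the Laplacian of a connected graph is one-dimensional — this is standard, or follows from the Matrix Tree Theorem \ref{thm-matrix-tree} applied to $S$ with $|S| = n-1$, since connectedness gives a spanning tree of nonzero weight). With $0$ simple and $L$ positive semi-definite, Theorem \ref{thm-innerminors} applies cleanly.

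The main obstacle I anticipate is not the inequality bookkeeping but making sure the sign analysis of $(-1)^{|C|}\omega(\Sigma_C)$ is airtight and that the argument does not collapse when $\Sigma_C = \emptyset$ for many $C$ (those terms simply vanish, which is fine) — and, more subtly, handling the case where $G$ is negative semi-definite but $\GC(A)$ might be disconnected: that is excluded by hypothesis, so it is not an issue here, but one should note that the theorem is stated per connected component, so the "remove negative edges" reasoning of the preceding lemma is what licenses restricting attention to a single negative cut inside the connected $G$. A cleaner way to organize the final contradiction is: assume $A$ is negative semi-definite with $\GC(A)$ connected but with a negative cut $(V_1,V_2)$; then $L = -A$ is positive semi-definite of rank $n-1$, so by Theorem \ref{thm-innerminors} all proper principal minors $[\LC(G)]_{V_1\setminus C, V_1\setminus C}$ are strictly positive; plug into Theorem \ref{thm-identity}; observe every summand is $\ge 0$ and the $C = \emptyset$ summand is $> 0$; conclude the sum is $> 0 \ne 0$, contradiction. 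Hence no negative cut exists, and by the preceding lemma $G$ has a positive spanning tree. $\qquad\blacksquare$
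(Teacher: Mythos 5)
Your strategy is the one the paper itself only sketches in the remark \emph{after} this theorem, not the proof it actually gives: you combine the identity of Theorem \ref{thm-identity} with the strict positivity of proper principal minors from Theorem \ref{thm-innerminors}. That combination is sound, and your sign analysis of $(-1)^{|C|}\omega(\Sigma_C)$ is correct --- but it requires $\operatorname{rank} A = n-1$, which is not a hypothesis of the theorem. The genuine gap is in your attempted repair: it is \emph{not} true that a connected Coates graph together with zero row sum forces $0$ to be a simple eigenvalue of $A$. That standard fact about graph Laplacians needs nonnegative edge weights, whereas here the weights $a_{ij}$ may be negative. Concretely, take
\[
A=\begin{pmatrix} -2 & 1 & 1\\ 1 & -\tfrac12 & -\tfrac12\\ 1 & -\tfrac12 & -\tfrac12\end{pmatrix}=-\tfrac12\,uu^T,\qquad u=(2,-1,-1)^T .
\]
This $A$ is symmetric, has zero row sum, is negative semi-definite, and $\GC(A)$ is the complete (hence connected) triangle, yet $\operatorname{rank}A=1<n-1$. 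Your fallback justification via Theorem \ref{thm-matrix-tree} fails for the same reason: $[\LC(G)]_{S,S}$ with $|S|=n-1$ is the \emph{sum} of the spanning-tree weights, and with mixed signs this sum can vanish even though every individual spanning tree has nonzero weight (here $a_{12}a_{13}+a_{12}a_{23}+a_{13}a_{23}=1-\tfrac12-\tfrac12=0$). In the rank-deficient case your argument degenerates to ``all summands in the identity are $\geq 0$ and sum to zero, hence all vanish,'' which produces no contradiction.

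The paper's own proof avoids all of this with a direct test-vector computation that needs no rank hypothesis: given a negative cut $(V_1,V_2)$, let $v$ be the indicator vector of $V_1$; using the zero row sum,
\[
v^TAv=\sum_{i\in V_1}\sum_{j\in V_1}A_{ij}=-\sum_{i\in V_1}\sum_{j\in V_2}A_{ij}>0,
\]
since all cut entries $A_{ij}$, $i\in V_1$, $j\in V_2$, are negative and at least one is nonzero by connectivity. This contradicts negative semi-definiteness directly. To salvage your route you would have to either add $\operatorname{rank}A=n-1$ as an explicit hypothesis (as the paper's remark does) or give a separate argument for the degenerate case.
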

\begin{proof}
 Assume there is no positive spanning tree. By the above theorem this implies the existence of a negative cut given
 by the partition $V_1,V_2$ of $V(\GC(A))$, that is $\forall e \in V_1 \times V_2 \cap E:\: \omega(e)<0$.
 For $i \in V(\GC(A))$ define $z_i= \sum_{j \in V_1} A_{ij}$ and $z_i'=\sum_{j \in V_2} A_{ij}$.
 Since $A$ has zero row sum, $z_i+z'_i=0$ und by the negativity of the above cut
$\forall i \in V_1:\: z'_i \leq 0, \exists i \in V_1: \: z'_i<0$. Finally consider $v \in \RR^n$ with
\[v_i:=\begin{cases}
      1 & \text{ if } i \in V_1 \\
      0 & \text{ if } i \in V_2 \\
     \end{cases}.\]
Then $v^T A v= \sum_{i,j \in V_1 \times V_1} A_{ij} = \sum_{i \in V_1} z_i >0$, thus $A$ is not negative
semi-definite, a contradiction.
\end{proof}

\begin{rem} For matrices of rank $n-1$, one can also derive this result from Theorem \ref{thm-identity}. Since $V_1,V_2$ define a negative cut, $(-1)^{|C|} \omega(\Sigma_C)$ is non-negative for all $C$ and positive iff all vertices in $C$ are incident to an edge between
$V_1$ and $V_2$. Since according to Theorem \ref{thm-innerminors} all minors $\LC(G)_{S,S}$ with $|S| <n$ are positive
if our system is stable, the left-hand-side of the identity in Theorem \ref{thm-identity} is positive,
whereas the right-hand-side is zero.
\end{rem}

\begin{rem}
If $\GC(A)$ has disconnected components, every such component has a positive spanning tree. 
\end{rem}

We now apply Theorem \ref{thm:Pos_spanningtree} and Theorem \ref{thm-matrix-tree} to derive an upper bound on the absolut weight 
on negative edges in induced lines.

\begin{thm}\label{thm-weightbound}
Let $H$ be an \emph{induced line} of $\GC(A)=(V,E,\omega)$ with at least two edges, i.e.\ a connected induced subgraph  for which all but  two vertices
have degree two in $\GC(A)$.
Then $H$ contains at most one negative edge $e$ and if it contains one, 
the absolute value of its weight is bounded from above by 
\begin{align*} \frac{\prod_{a \in E(H)\setminus \{e\}}\omega(a)}{\sum_{b \in E(H)\setminus \{e\}} \prod_{a \in E(H)\setminus \{e,b\}} \omega(a)}=\frac{1}{\sum_{a \in E(H) \setminus \{e\}} \frac{1}{\omega(a)}}.\end{align*}
\end{thm}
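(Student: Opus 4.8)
I will orient $H$ as a path $v_0 - v_1 - \cdots - v_k$ with $k = |E(H)| \ge 2$, write $e_i = \{v_{i-1},v_i\}$ and $w_i = \omega(e_i)$, and note that the internal vertices $v_1,\dots,v_{k-1}$ have degree two in $\GC(A)$, so each is incident only to $e_i$ and $e_{i+1}$. For the first claim, by Theorem~\ref{thm:Pos_spanningtree} every connected component of $\GC(A)$ has a positive spanning tree, and since a spanning tree of a connected graph contains an edge across every cut, $\GC(A)$ has no nonempty negative cut. For $1 \le i < j \le k$ the set $V_1 = \{v_i,\dots,v_{j-1}\}$ consists of internal vertices only, and a short check using the degree-two property shows $E(V_1, V\setminus V_1) = \{e_i,e_j\}$; if both $w_i$ and $w_j$ were negative this cut would be negative, a contradiction. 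Hence at most one of $w_1,\dots,w_k$ is negative.

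Assume from now on that $e = e_m$ is the unique negative edge. The heart of the proof is to evaluate the principal minor $[\LC(G)]_{S,S}$ for $S = \{v_1,\dots,v_{k-1}\}$ via the Matrix Tree Theorem~\ref{thm-matrix-tree}, for which I claim $\FC_S = \{K_j : j = 1,\dots,k\}$ with $K_j := \{e_1,\dots,e_k\}\setminus\{e_j\}$. For the inclusion ``$\subseteq$'': if some $K \in \FC_S$ contained an edge $f \notin \{e_1,\dots,e_k\}$, then by Lemma~\ref{lem-fcs-forest} the tree of $K$ through $f$ has exactly one vertex outside $S$, so (as both endpoints of $f$ outside $S$ would give two such vertices) some endpoint of $f$ lies in $S$; but every vertex of $S$ has degree two and meets only edges among the $e_i$, a contradiction. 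Thus $K \subseteq \{e_1,\dots,e_k\}$, and $|K| = |S| = k-1$ forces $K = K_j$ for a unique $j$. Conversely, deleting $e_j$ from the path turns $K_j$, viewed as an induced subgraph, into one or two sub-paths, each containing exactly one of the endpoints $v_0,v_k$ and therefore exactly one vertex outside $S$, so $K_j \in \FC_S$. Hence $[\LC(G)]_{S,S} = \sum_{j=1}^{k}\omega(K_j) = \sum_{j=1}^{k}\prod_{i\ne j}w_i$.

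To finish: since $A$ is negative semi-definite with zero row sum, $\LC(G)=-A$ (Remark~\ref{rem-adjlap}(b)) is positive semi-definite, so Sylvester's criterion~\ref{thm-sylvester} gives $[\LC(G)]_{S,S}\ge 0$. Isolating the $j=m$ term and factoring $w_m$ out of the remaining ones rewrites this as $\prod_{i\ne m}w_i + w_m\sum_{b\ne m}\prod_{i\notin\{m,b\}}w_i \ge 0$. Since $w_m<0$ while all the sums and products of the $w_i$ with $i\ne m$ are strictly positive, rearranging and then dividing through by $\prod_{i\ne m}w_i$ yields $|w_m| \le \bigl(\prod_{i\ne m}w_i\bigr)\big/\bigl(\sum_{b\ne m}\prod_{i\notin\{m,b\}}w_i\bigr) = 1\big/\sum_{i\ne m}w_i^{-1}$, which is the asserted bound.

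The one step demanding genuine care is the identification of $\FC_S$ --- excluding forests that leave the line $H$ or split it into more than two components. Everything there rests on the hypothesis that the internal vertices have degree two, which is precisely what makes $e_1,\dots,e_k$ the only edges of $\GC(A)$ incident to $S$; the cut argument for the first claim and the concluding algebra are routine.
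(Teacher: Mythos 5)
Your proof is correct and follows essentially the same route as the paper: take $S$ to be the set of internal vertices of the line, identify $\FC_S$ as the $(|E(H)|-1)$-element subsets of $E(H)$, apply Theorem~\ref{thm-matrix-tree}, and extract the bound from the non-negativity of the resulting principal minor. The only cosmetic difference is that for the ``at most one negative edge'' claim you argue via the two-edge cuts around internal segments (no negative cut), whereas the paper notes that a positive spanning tree omits at most one edge of $H$; these are equivalent by the lemma preceding Theorem~\ref{thm:Pos_spanningtree}.
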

\begin{proof}
Let $T$ be a spanning tree of $\GC(A)$. Then $|E(H)\setminus E(T)|\leq 1$, hence $H$ contains at most one negative edge
by Theorem \ref{thm:Pos_spanningtree}.
By definition of an induced line, $H$ is a tree with two leaves. Let $\{i_1,\ldots, i_{|H|}\} = V(H)$
be an enumeration of the vertices of $H$ such that $E(H)=\{(i_k,i_{k+1}) \setsep k \in \{1,\ldots,|H|-1\}\}$.
The bound can immediately be derived from Theorem \ref{thm-matrix-tree} applied to 
$S=\{i_2,\ldots, i_{|H|-1}\}$. Notice that the elements of $\FC_S$ are
precisely the subsets of $E(H)$ of
size $|H|-2$.
The bound on the weight of a negative edge is then derived by using the non-negativity of the principal minor $[A]_{S,S}$.
\end{proof}

Every connected subgraph of an induced line is again an induced line. Hence for each 
induced line in which a negative edge is contained, Theorem \ref{thm-weightbound}
gives a bound on the weight of this edge. If one such line is
contained in another, the longer one yields a sharper bound. 

\begin{exm} We now apply the results of this section to the Kuramoto model defined in Eq.\ \eqref{Kuramoto} and reveal common properties of the 
configuration of all possible phase-locked systems. Consider Theorem \ref{thm:Pos_spanningtree} with the edge weights graph $G=\GC(A)$ given 
by $A_{ij}=B_{ij}\cos(\varphi_{ji})$. Since all coupling constants $B_{ij}\geq 0$, stability of the phase-locked state requires that each component of the physical 
coupling network has a spanning tree of oscillators obeying \mbox{$|\varphi_{ji} |<\frac{\pi}{2}$}. 

Theorem \ref{thm-weightbound} implies that
stability of the phase-locked state can only occur if along each induced line $H$ of $G$ there are at most two neighbouring oscillators with 
absolute phase-difference greater or equal $\frac{\pi}{2}$. If such a pair $(i,j)$ of oscillators exists, the phase-locked state
can only be stable if the absolut value of the reciprocal of $\cos(\varphi_{ji})$ is larger then the coupling constant $B_{ij}$ times
the sum of the reciprocals of all other coupling constants $B_{k\ell}$ in $H$.
\end{exm}

\section{Conclusion}

In the present paper we used an extension of Sylvester's criterion to formulate meso-scale obstructions to stability of one-dimensional center manifolds, in particular, of the center manifold of phase-locked solutions for the Kuramoto model on complex networks. We obtained a number of necessary conditions for stability which restrict the position and weight of edges of the system's Coates graph. Among other results, this investigation provides a rigorous proof for the so-called positive-spanning-tree criterion \cite{Do2012} that states that stability in the Kuramoto model requires the existence of a spanning tree of edges associated with positive elemets of the system's Jacobian.

In network dynamics, system-level implications of meso-scale structures are just now intensively studied \cite{Fortunato,ChaosFocusIssue}. While efforts were long concentrated on the implications of local topological properties (such as the degree distribution), and global topological properties (such as the clustering coefficient), recent results \cite{Arenas,Nishikawa,Mori} indicate that also meso-scale properties can crucially affect synchronization. We believe that the results obtained here will be valuable for network dynamics, because they allow to extract meso-scale structural properties for the dynamics of networks. For a minor of given size $k$, already an application of Sylvester's criterion reveals necessary conditions for stability that restrict the structure of motifs containing $k$ vertices. The conditions obtained in this way are only necessary and not sufficient because instability could still occur due to instabilities in larger or smaller motifs. More complex necessary conditions for stability can be formulated on the meso-scale. E.g.\ for induced lines (Theorem \ref{thm-weightbound}), an obstruction to stability are lines with more then one negative edge or those with one negative edge whose weight is below a certain threshold.

In the future this approach may allow to attribute dynamical characteristics such as the onset of oscillations or the loss of synchronization to a specific meso-scale structure in the system. Such insights are presently actively sought in network dynamics \cite{GrossSayama2009}. Further, they may hold the key to important real world applications such as the operation of smart grids, future adaptive power grids, which will have to maintain synchronization in very different demand situations.


\begin{thebibliography}{99}
%
\bibitem{ChaosFocusIssue} 
J.A.\ Almendral, R.\ Criado, I.\ Leyva, J.M.\ Buld\'u, I.\ Sendin\~a-Nadal, 
\emph{Introduction to focus issue: mesoscales in complex networks}.
Chaos 21, (2011), 016101.
%
\bibitem{Arenas} A.\ Arenas, A.\ Diaz-Guilera, J.\ Kurths, Y.\ Moreno, C.\ Zhou. 
\emph{Synchronization in complex networks}.
Phys. Rep. 469, (2008), 93. 
%
\bibitem{Aulbach1984}
B.~Aulbach.
\emph{Continuous and discrete dynamics near manifolds of equilibria}.
Lecture Notes in Mathematics 1058, Springer (1984).
%
\bibitem{Bhat-Bernstein2003}
S.P.~Bhat, D.S.~Bernstein.
\emph{Nontangency-based Lyapunov tests for convergence and stability in
systems having a continuum of equilibria}.
SIAM J. Control Optim. 42, (2003), 1745--1775.
%
\bibitem{Chaiken}
Chaiken, S. \emph{A Combinatorial Proof of the All Minors Matrix Tree Theorem.}. SIAM J. Algebraic Discrete Methods 3, no. 3 (1982), 319-329.

\bibitem{Crawford1991}
J.D.\ Crawford.
\emph{Introduction to bifurcation theory}.
Reviews of Modern Physics, 1991.
%
\bibitem{Do2012} A.L.\ Do, S.\ Boccaletti, T.\ Gross. 
\emph{Graphical notation reveals topological stability criteria for collective dynamics in complex networks}.
Phys. Rev. Lett. 108, (2012), 194102-5.
%
\bibitem{do-gross2012}
A.L.\ Do, T.\ Gross.
\emph{Self-organization in continuous adaptive networks}.
Monograph to be published by River Publishers.
%
\bibitem{Fortunato} 
S.\ Fortunato. 
\emph{Community detection in graphs}.
Phys. Rep. 486, (2010), 75.
%
\bibitem{Frey2012} E.\ Frey, K.\ Kroy (Eds.).
\emph{Focus issue on Mesoscale in Biology}. New Journal of Physics, (2012), to appear,
%
\bibitem{GodsilRoyle2001}
C.\ Godsil, G.F.\ Royle.
\newblock {\em Algebraic Graph Theory}.
\newblock Springer, 2001.
%
\bibitem{Goebel2010}
R.~Goebel.
\emph{Basic results on pointwise asymptotic stability and set-valued
Lyapunov functions}.
49th IEEE Conference on Decision and Control (CDC), 15-17 Dec.\ 2010,
(2010), 1571--1574.
%
\bibitem{GrossSayama2009}
T.\ Gross, H.\ Sayama (Eds.).
\emph{Adaptive Networks: Theory, models and applications}, Springer (2009).
%
\bibitem{Mori} F.\ Mori. 
\emph{Necessary condition for frequency synchronization in network
structures}.
Phys. Rev. Lett. 104, (2010), 108701.
%
\bibitem{Nishikawa} T.\ Nishikawa, A.E.\ Motter. 
\emph{Network synchronization landscape reveals compensatory structures,
quantization, and the positive effect of negative interactions}.
PNAS 107, (2010), 10342. 
%
\bibitem{PikovskyRosenblumKurths2003}
A.\ Pikovsky, M.\ Rosenblum, J.\ Kurths.
\newblock {\em Synchronization: A Universal Concept in Nonlinear Sciences
  (Cambridge Nonlinear Science Series)}.
\newblock Cambridge University Press, 2003.
%
\bibitem{Sijbrand1985}
J.\ Sijbrand.
\emph{Properties of center manifolds}.
Trans Amer. Math. Soc. 289, (1985), 431--469.
%
%
\bibitem{Zhang2011}
F. Zhang.
\emph{Matrix Theory: Basic results and techniques (Universitext)}.
Springer, 2nd edition (2011).
\end{thebibliography}
\end{document}